\newcommand{\veps}{\varepsilon}
\newcommand{\R}{\mathbb{R}}
\newcommand{\C}{\mathbb{C}}
\newcommand{\N}{\mathbb{N}}
\newcommand{\vp}{\varphi}
\newtheorem{defin}{Definition}
\newtheorem{theorem}[defin]{Theorem}
\newtheorem{exa}{Example}
\newenvironment{example}{\begin{exa}\rm}{\end{exa}}
\newtheorem{lemma}[defin]{Lemma}
\newenvironment{proof}
{\noindent{\it Proof.}}{\hfill $\Box$\par\vspace{2.5mm}}
\newtheorem{rem}{Remark}
\newenvironment{remark}{\begin{rem}\rm}{\end{rem}}
\numberwithin{equation}{section}
\renewcommand{\ps@myheadings}{%
\renewcommand{\@evenhead}%
{{\rm\thepage}\hfil{\sc Gundersen, Heittokangas, Wen}\hfil}%
\renewcommand{\@oddhead}%
{\hfil{{\sc Nevanlinna and Valiron deficient values}\hfil{\rm\thepage}}}%
\renewcommand{\@evenfoot}{}%
\renewcommand{\@oddfoot}{}%
}\makeatother \pagestyle{myheadings}
\renewenvironment{thebibliography}[1]{%
\begin{oldthebibliography}{#1}%
\setlength{\parskip}{0pt}%
\setlength{\itemsep}{0pt}%
}%
{%
\end{oldthebibliography}%
}
\title{\bf\Large Deficient values of solutions of \\ linear differential equations}
\author{\textbf{Gary G.~Gundersen$^1$, Janne Heittokangas$^{2,}$\footnote{Corresponding author.}$\, $ and Zhi-Tao Wen$^3$}\\[2pt]
{\footnotesize $^1$University of New Orleans, Department of Mathematics, New Orleans,}\\[-5pt]
{\footnotesize LA 70148, USA; ggunders@uno.edu}\\[2pt]
{\footnotesize $^{2,3}$Taiyuan University of Technology, Department of Mathematics,}\\[-5pt]
{\footnotesize Yingze West Street, No.~79, Taiyuan 030024, China}\\[2pt]
{\footnotesize $^2$\emph{Current address:} University of Eastern Finland, Department of Physics and Mathematics,}\\[-5pt]
{\footnotesize P.O.~Box 111, 80101 Joensuu, Finland; janne.heittokangas@uef.fi}\\[2pt]
{\footnotesize $^3$\emph{Current address:} Shantou University, Department of Mathematics,}\\[-5pt]
{\footnotesize Daxue Road No.~243, Shantou 515063, China; zhtwen@stu.edu.cn}}
\date{}
\begin{document}
\maketitle

\begin{abstract}
Differential equations of the form $f'' + A(z)f' + B(z)f = 0$ (*) are considered,
where $A(z)$ and $B(z) \not\equiv 0$ are entire functions.
The Lindel\"of function is used to show that for any $\rho \in (1/2, \infty)$, there exists an
equation of the form (*) which possesses a solution $f$ of order $\rho$ with a Nevanlinna
deficient value at $0$, where $f, A(z), B(z)$ satisfy a common growth condition. It is known
that such an example cannot exist when $\rho \leq 1/2$. For smaller growth functions, a geometrical modification of an example of Anderson and Clunie is used to show that for any $\rho \in (2, \infty)$, there exists an equation of the form (*) which possesses a solution $f$ of logarithmic order $\rho$ with a
Valiron deficient value of at $0$, where $f, A(z), B(z)$ satisfy an analogous growth condition. This result is essentially sharp. In both proofs, the separation of the zeros of the indicated solution plays a key role. Observations on the deficient values of solutions of linear differential equations are also given, which include a discussion of Wittich's theorem on Nevanlinna deficient values, a modified Wittich theorem for Valiron deficient values, consequences of Gol'dberg's theorem, and examples to illustrate possibilities that can occur. \\

\noindent
\textbf{Key words:} Finite order, logarithmic order, linear differential equation,
Nevanlinna deficient value, Valiron deficient value\\

\noindent
\textbf{MSC 2010:} {Primary 34M05, Secondary 34M10, 30D35.}
\end{abstract}


\section{Introduction} \label{intro}


The solutions of the linear differential equation 
	\begin{equation}\label{lde}
	f^{(n)}+A_{n-1}(z)f^{(n-1)}+\cdots +A_1(z)f'+A_0(z)f=0
	\end{equation}
with entire coefficients $A_0(z),\ldots,A_{n-1}(z)$, $A_0(z)\not\equiv 0$,  are entire, and it is 
well known that the zeros of any solution $f\not\equiv 0$ of \eqref{lde} are of multiplicity $\leq n-1$.
The main focus of this paper is on solutions of \eqref{lde} which have less than the usual frequency of zeros. A standard measurement of the frequency of a $c$-point ($c\in\C$) of an entire function $f$ is the Nevanlinna deficiency $\delta_N(c, f)$ defined by 
$$\delta_N(c,f) = \liminf_{r\to\infty} \frac{m(r,f,c)}{T(r,f)}=1-\limsup_{r\to\infty}\frac{N(r,f,c)}{T(r,f)}.$$
If $\delta_N(c, f) > 0$, then $c$ is said to be a {\it Nevanlinna deficient value} of $f$. 

Let $\rho(f)$ denote the order of an entire function $f$. It is known \cite[p.~207]{GO} that an entire function $f$ cannot possess a finite Nevanlinna deficient value when $\rho(f) \leq 1/2$. 
Thus it can be asked:
\begin{quote}
\emph{For any given $\rho \in (1/2, \infty)$, does there exists an equation of the form \eqref{lde} 
that possesses a solution $f$ satisfying $\rho(f) = \rho$ and $\delta_N(0, f) > 0$? }
\end{quote}

If there are no restrictions on the orders of the coefficients $A_j(z)$ in \eqref{lde}, then Gol'dberg's theorem could be used to easily answer this question affirmatively, see the sentence following \eqref{double ineq} below. By putting a common growth restriction on the coefficients, the following result answers this question for second order equations
	\begin{equation}\label{ldeAB}
	f''+A(z)f'+B(z)f=0,
	\end{equation}
where $A(z)$ and $B(z)\not\equiv 0$ are entire functions.

\begin{theorem}\label{main1}
For any given $\rho\in (1/2,\infty)$, there exists an equation of the form \eqref{ldeAB} which
possesses a solution $f$ satisfying $\delta_N(0,f)>0$ and $\rho=\rho(f)\geq\rho(A)\geq\rho(B)$.
\end{theorem}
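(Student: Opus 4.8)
The plan is to construct the solution $f$ first and then build the coefficients $A,B$ around it.

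\emph{Step 1 (the solution).}\quad For the required $f$ I would use the Lindel\"of function $L_\rho$: the canonical product of genus $\lfloor\rho\rfloor$ with simple zeros at $z_k=k^{1/\rho}$, $k=1,2,\dots$, on the positive real axis. It has order exactly $\rho$ and completely regular growth, and Lindel\"of's classical asymptotics give $\log|L_\rho(re^{i\theta})|=r^\rho H(\theta)(1+o(1))$ with $H(\theta)=\frac{\pi}{\sin\pi\rho}\cos\big(\rho(\pi-|\theta|)\big)$. Hence $T(r,L_\rho)\sim\frac{r^\rho}{2\pi}\int_{-\pi}^{\pi}H^+(\theta)\,d\theta$, while $N(r,1/L_\rho)\sim r^\rho/\rho$ because $n(r,1/L_\rho)\sim r^\rho$. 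A direct comparison then shows $\limsup_{r}N(r,1/L_\rho)/T(r,L_\rho)<1$ (this limit equals $\sin\pi\rho$ when $\rho\in(1/2,1)$), so that $\delta_N(0,L_\rho)>0$. When $\rho\in\N$ the construction degenerates, but there the function $f=e^{z^\rho}$ already satisfies $\delta_N(0,f)=1$ and solves $f''+Af'+Bf=0$ with $A$ a constant and $B$ a polynomial; so from now on assume $\rho\notin\N$ and set $f=L_\rho$.

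\emph{Step 2 (reduction to an interpolation problem).}\quad Since $f$ has only simple zeros, $f$ solves an equation of the form \eqref{ldeAB} as soon as an entire function $A$ is available: the sole requirement is that the entire function $f''+Af'$ vanish at every $z_k$, which, because $f'(z_k)\neq0$, is exactly the interpolation condition $A(z_k)=-f''(z_k)/f'(z_k)$ for all $k$. Given such an $A$, put $B:=-(f''+Af')/f$; then $B$ is entire, $f''+Af'+Bf\equiv0$, and $B\not\equiv0$ (otherwise $A=-f''/f'$ would not be entire). Thus everything reduces to solving this interpolation problem with an entire $A$ of order $\le\rho$.

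\emph{Step 3 (the interpolation, and the main obstacle).}\quad This is where the separation of the zeros $z_k$ is decisive: although consecutive zeros approach each other, $|z_{k+1}-z_k|\sim\frac1\rho|z_k|^{1-\rho}$, they remain separated in the sense $|z_k-z_j|\gtrsim|z_k|^{-(\rho-1)}$ for $j\neq k$. Using this together with the regular growth of $L_\rho$ one obtains a lower bound for $|f'(z_k)|$ and a polynomial upper bound $|f''(z_k)/f'(z_k)|=O(|z_k|^{\rho-1+\veps})$ for the interpolation data; feeding these into the standard interpolation machinery for entire functions of finite order (the kind used in Bank--Laine and prescribed-zero problems) produces an entire $A$ with $A(z_k)=-f''(z_k)/f'(z_k)$ and $\rho(A)\le\rho$. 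I expect the careful execution of this step to be the crux of the proof.

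\emph{Step 4 (the growth chain and conclusion).}\quad If $\rho(A)<\rho$, replace $A$ by $A+f$; the interpolation values are unchanged since $f(z_k)=0$, and now $\rho(A)=\rho$. Define $B$ as in Step~2. From $Bf=-(f''+Af')$, $\rho(A)\le\rho$, and the fact that $B$ is entire, one gets $\rho(B)\le\rho=\rho(A)$. Hence $\rho=\rho(f)=\rho(A)\ge\rho(B)$, and together with $\delta_N(0,f)>0$ this gives an equation of the form \eqref{ldeAB} with the desired solution, completing the proof.
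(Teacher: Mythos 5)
Your proposal follows the same basic strategy as the paper: take the Lindel\"of function as the solution $f$, solve the interpolation problem $A(z_k)=-f''(z_k)/f'(z_k)$ at its zeros to get an entire $A$ of order $\le\rho$, then define $B=-(f''+Af')/f$. However, there are several real gaps.

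\emph{Step 1, integer $\rho$.}\quad The claim that ``the construction degenerates when $\rho\in\N$'' is incorrect. The Lindel\"of function $L_\rho$ has $\delta_N(0,L_\rho)>0$ for every $\rho>1/2$, \emph{including} integer $\rho$ (indeed $\delta_N(0,L_\rho)=1$ when $\rho\in\N$), so no separate treatment is needed; the paper uses $L_\rho$ for the entire range. Also note the paper takes zeros at $z_n=-n^{1/\rho}$, not $+n^{1/\rho}$, consistent with the standard definition of $L_\rho$.

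\emph{Step 2, $B\not\equiv0$.}\quad The parenthetical ``otherwise $A=-f''/f'$ would not be entire'' is not a valid argument. If $B\equiv0$, then $f''+Af'\equiv0$ forces $A=-f''/f'$ as an identity, and nothing a priori prevents this meromorphic function from in fact being entire (compare $f=e^z$, $-f''/f'=-1$). One must argue more carefully. The paper does so as follows: if $B\equiv0$, integrating $f''+Af'=0$ gives $f'=\exp\left(\int^z A\right)$, which is zero-free; since $f$ has finite order, $A$ must be a polynomial, so $\rho(f)=\deg A+1\in\N$, hence the genus $p=\rho(f)\ge1$; but then the explicit formula for $L_\rho'$ shows $L_\rho'$ has a $p$-fold zero at the origin, a contradiction. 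You need some argument of this kind, tied to the structure of $L_\rho$.

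\emph{Step 3, the interpolation.}\quad This is the heart of the matter and you essentially defer it: ``feeding these into the standard interpolation machinery ... produces an entire $A$.'' But the interpolation machinery (from Heittokangas--Laine) gives a bound on $\rho(A)$ in terms of \emph{both} the growth of the target sequence $\sigma_k$ \emph{and} the quality of the zero separation, and the latter is what dominates. The paper proves two nontrivial lemmas: (i) that the zeros of $L_\rho$ are \emph{uniformly $q$-separated} for every $q>\rho$, meaning $\inf_k\{|z_k|e^{C|z_k|^q}|L_\rho'(z_k)|\}>0$, and (ii) a polynomial-in-$\log$ bound on $|\sigma_k|$. The separation bound is the dangerous one: for $\rho>1$ consecutive zeros cluster like $|z_k|^{1-\rho}$, and the near-diagonal products $\prod_{n\ne k}|1-z_k/z_n|$ can decay exponentially; showing that this decay is no worse than $e^{-C|z_k|^q}$ for $q$ arbitrarily close to $\rho$ requires the careful two-sided estimates in the paper's Lemma on uniform $q$-separation. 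Your one-line claim ``one obtains a lower bound for $|f'(z_k)|$'' glosses over exactly what must be proved; without the quantitative separation the interpolation lemma only yields $\rho(A)\le\max\{\rho,q\}$ for whatever $q$ your separation gives, and you must show $q$ can be pushed down to $\rho$.

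\emph{Step 4.}\quad Replacing $A$ by $A+f$ to force $\rho(A)=\rho(f)$ is harmless (the interpolation values are preserved) but entirely unnecessary: the theorem only asks for $\rho(f)\ge\rho(A)\ge\rho(B)$, not equality. You should also observe that $\rho(B)\le\rho(A)$ follows directly from the lemma on the logarithmic derivative applied to $B=-(f''/f+A\,f'/f)$.

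In summary, the architecture of your proposal matches the paper's proof, but the quantitative separation lemma for the zeros of $L_\rho$ --- which you correctly identify as the crux --- is precisely what is missing, and the argument for $B\not\equiv0$ needs to be replaced.
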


Theorem~\ref{main1} is sharp with respect to $\rho \in (1/2, \infty)$ because, as noted above, the result does not hold for $\rho \leq 1/2$. The solution $f$ in the proof of Theorem~\ref{main1} is the classical Lindel\"of function $L_{\rho}$ in \eqref{L} below. Thus the crux of the proof is to find entire
coefficients $A(z)$ and $B(z)$ satisfying \eqref{ldeAB} and
	\begin{equation} \label{double ineq}
	\rho(f) \geq \rho(A) \geq \rho(B).
	\end{equation}

If the inequalities \eqref{double ineq} were removed from the conclusion of Theorem~\ref{main1}, then Theorem~\ref{main1} would easily follow by combining Gol'dberg's theorem with the known properties that $L_\rho$ has all simple zeros and $\delta_N(0,L_\rho)>0$, see Sections~\ref{multiplicities-sec} and \ref{proof-main1-sec}. The proof of Theorem~\ref{main1} involves proving a separation of zeros property of $L_{\rho}$, namely, that the zeros of $L_{\rho}$ are uniformly $q$-separated for any $q>\rho$.

Regarding \eqref{double ineq}, recall that \eqref{ldeAB} cannot possess a nontrivial solution of finite order in the case when $\rho(A)<\rho(B)$, see \cite[Theorem~2]{Gundersen2}. Many examples in the literature of solutions of \eqref{ldeAB} involve elementary functions, where $A(z), B(z), f$ are of integer order and satisfy the double inequalities \eqref{double ineq}. However, there are examples of $A(z), B(z), f$ that are of integer order and still do not satisfy \eqref{double ineq}, as Examples~\ref{w-example} and~\ref{Bank-ex} below show. In addition,  Theorem~\ref{main1} addresses all the orders in the infinite interval $(1/2,\infty)$, not just the integer orders. 

\begin{example}\label{w-example}
If $w(z)$ is any entire function, then $f(z)=e^z$ satisfies
$$f''+w(z)f'-(1+w(z))f=0.$$
\end{example}

For smaller growth functions $f$, we obtain an analogous result to Theorem~\ref{main1} by considering Valiron deficient values and logarithmic order. The Valiron deficiency $\delta_V(c, f)$ of a $c$-point of an entire function $f$ is defined by 
$$\delta_V(c,f) = \limsup_{r\to\infty} \frac{m(r,f,c)}{T(r,f)}=1-\liminf_{r\to\infty}\frac{N(r,f,c)}{T(r,f)}.$$
Clearly $0\leq \delta_N(c,f)\leq \delta_V(c,f)\leq 1$. If $\delta_V(c, f) > 0$, then $c$ said to be a {\it Valiron deficient value} of $f$.

For slowly growing entire functions, finite Nevanlinna deficient values are not possible but Valiron deficient values are possible. The growth of such functions $f$ can be measured in terms of the logarithmic order $\rho_{\log} (f)$ defined by
\begin{equation*}
\rho_{\log} (f)=\limsup_{r\to\infty}\frac{\log T(r,f)}{\log\log r}.
\end{equation*}
Observe that finite logarithmic order implies zero order, nonconstant polynomials have logarithmic order one, and that there do not exist any nonconstant entire functions of logarithmic order $<1$, see \cite{BP, Chern}.

A classical result of Valiron \cite{AC, Valiron} says that any entire function $f$ satisfying $T(r,f)=O\left(\log^2 r\right)$
has no finite Valiron deficient values. Entire functions $f$ satisfying $\rho_{\log} (f) < 2$ have this growth rate. Hence we state our main result regarding Valiron deficient values as follows.

\begin{theorem}\label{main2}
For any given $\rho\in (2,\infty)$, there exists an equation of the form \eqref{ldeAB} which
possesses a solution $f$ satisfying $\delta_V(0,f)=1$ and $\rho=\rho_{\log}(f)\geq\rho_{\log}(A)\geq\rho_{\log}(B)$,
where $A(z)$ is transcendental.
\end{theorem}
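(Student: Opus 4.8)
The plan is to imitate the strategy used for Theorem~\ref{main1}, replacing the Lindel\"of function $L_\rho$ by a suitable modification of the Anderson--Clunie example of an entire function with a Valiron deficient value. Concretely, I would start from a canonical product $f$ whose zeros are placed along the positive real axis (or a few rays) at points $z_k$ with $\log|z_k| \sim k^{1/(\rho-1)}$ (up to constants), so that the integrated counting function $N(r,f,0)$ grows like a power of $\log r$ of exponent $\rho-1$ while the full characteristic $T(r,f)$ is forced, via the sparseness of the zeros, to have logarithmic order exactly $\rho$, with $N(r,f,0)=o(T(r,f))$ — in fact $N(r,f,0)=o(T(r,f))$ strongly enough that $\delta_V(0,f)=1$. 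The Anderson--Clunie construction (or the variant in \cite{AC}) already produces such an $f$; the ``geometrical modification'' alluded to in the abstract is the adjustment of the location/clustering of the zeros needed to make the separation estimates below go through while keeping the logarithmic order equal to the prescribed $\rho\in(2,\infty)$.

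Next, with $f$ in hand I would recover the coefficients from the equation itself: from \eqref{ldeAB} one has
\begin{equation*}
A(z) = -\frac{f''}{f'} - B(z)\,\frac{f}{f'}, \qquad
B(z) = -\frac{1}{f}\left(f'' + A(z)f'\right),
\end{equation*}
so a single free choice determines the other. The cleanest route, as in the proof of Theorem~\ref{main1}, is to choose $A(z)$ first as an entire function that kills the poles of $f''/f'$ and $f'/f$ coming from the zeros of $f$ (each zero of $f$ is simple, by the Anderson--Clunie construction, and any common zero of $f$ and $f'$ would be a multiple zero, which we exclude), and then to \emph{define}
\begin{equation*}
B(z) = -\frac{f'' + A(z)\,f'}{f}.
\end{equation*}
One must check that $B\not\equiv 0$ and that $B$ is entire: the latter requires that $f''+A f'$ vanish at each zero of $f$, which we arrange by choosing $A$ so that $A(z_k) = -f''(z_k)/f'(z_k)$ at every zero $z_k$ of $f$ (an interpolation condition, solvable since the $z_k$ are a discrete set and $f'(z_k)\neq 0$). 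The real content is to do this interpolation with growth control: $A$ must be transcendental but of logarithmic order $\le\rho$.

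The main obstacle, exactly as the introduction signals, is the \emph{separation of zeros}: to keep $\rho_{\log}(A)\le\rho_{\log}(f)=\rho$ and $\rho_{\log}(B)\le\rho_{\log}(A)$ we need the zeros $z_k$ of $f$ to be ``uniformly $q$-separated'' in the logarithmic scale for every $q>\rho$ — that is, the discs around the $z_k$ on which we perform the interpolation and estimate $f'/f$, $f''/f$ from below must be spaced out enough (relative to their radii) that the resulting entire interpolant $A$ and the quotient $B$ do not grow faster than logarithmic order $\rho$. Establishing this separation property for the modified Anderson--Clunie product, and then running the standard estimates of logarithmic derivatives together with a Mittag-Leffler / infinite-product construction of $A$ with the prescribed interpolation values and controlled growth, is where the work lies. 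Once the separation lemma is proved, the growth bounds $\rho_{\log}(f)\ge\rho_{\log}(A)\ge\rho_{\log}(B)$, the transcendence of $A$, and the equality $\rho_{\log}(f)=\rho$ follow by routine Nevanlinna-theoretic estimates, and $\delta_V(0,f)=1$ is inherited from the Anderson--Clunie example; finiteness and sharpness ($\rho>2$) match the Valiron $O(\log^2 r)$ threshold quoted above.
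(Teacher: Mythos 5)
Your overall architecture — construct a slowly growing $f$ with $\delta_V(0,f)=1$ whose zeros are well separated, solve the interpolation problem $A(z_k)=-f''(z_k)/f'(z_k)$ with controlled logarithmic order, then define $B=-(f''+Af')/f$ — matches the paper's. But there are two concrete gaps that would derail the argument as you have written it.

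First, you claim that the zeros of $f$ are simple \emph{``by the Anderson--Clunie construction,''} and that $\delta_V(0,f)=1$ is \emph{``inherited from the Anderson--Clunie example.''} Neither is true as stated. The original Anderson--Clunie function of \cite{AC} is a canonical product with \emph{negative real zeros whose multiplicities tend to infinity}; it cannot be a solution of \eqref{ldeAB}, precisely because any nontrivial solution of a second-order equation has only simple zeros. The entire point of the ``geometrical modification'' is to replace each multiple zero of the AC function by a cluster of simple zeros spread over a circle $|z|=b_n$ lying symmetrically in the sector $3\pi/4\le\arg z\le 5\pi/4$, and then to \emph{re-derive} both the logarithmic order and the Valiron deficiency from scratch (the formula \cite[(5.3)]{AC} for $\log M(r,f)$ no longer holds). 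The symmetry about the negative real axis is not cosmetic: it forces $f(r)=M(r,f)$ for $r>0$, which is the key identity used to bound $T(r,f)$ from below and so to show $N(r,1/f)=o(T(r,f))$ along $r=b_n$. If you instead put the zeros along the positive real axis (or arbitrary rays), you would lose this maximum-modulus identity, and the deficiency computation would have to be redone with a different alignment; your sketch does not notice this dependence.

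Second, the separation exponent you invoke is off by one. You write that the zeros must be uniformly logarithmically $q$-separated for every $q>\rho$, but in the logarithmic-order setting the correct threshold is $q>\rho-1$. Recall $\rho_{\log}(f)=\lambda_{\log}+1$, so the exponent of convergence of the zeros is $\rho-1$; the separation estimate for the modified AC product (where one must control products of three types: zeros on inner circles, outer circles, and on the same circle as $z_k$) yields uniform logarithmic $q$-separation exactly for $q>\rho-1$, and the interpolation lemma then produces $A$ with $\rho_{\log}(A)\le\max\{\lambda_{\log}+1,\,q+1\}=\rho$ provided $q\le\rho$. Requiring $q>\rho$ would be both unachievable for the constructed $f$ and unnecessary; feeding it into the interpolation lemma would only give the useless bound $\rho_{\log}(A)\le q+1>\rho$. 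Finally, your scaling remark $\log|z_k|\sim k^{1/(\rho-1)}$ is roughly right for the modulus but omits the crucial \emph{clustering}: the $k$-th zero lies on one of a very sparse set of circles $|z|=b_n$ carrying $c_n\asymp(\log b_n)^{\rho-1}$ zeros each, and the intra-circle spacing is where the hardest part of the separation estimate (the factor $P_3$) lives; treating the zeros as if they were individually spaced along a ray misses that term entirely.
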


Theorem~\ref{main2} is essentially sharp with respect to $\rho \in (2, \infty)$ because the result 
does not hold when $\rho < 2$, and the only unsettled case is when $\rho_{\log}(f) = 2$ and $f$ has infinite logarithmic type.

The solution $f$ in Theorem~\ref{main2} is a laborious modification of a function due to Anderson and Clunie \cite{AC}, and its symmetric geometric construction takes a substantial portion of this paper. More precisely, the proof of \cite[Theorem~2]{AC} involves a canonical product with negative real zeros of unbounded multiplicity. Such a function is a real entire function (real on reals), but it cannot be a solution of \eqref{ldeAB} because it has zeros with unbounded multiplicities. Thus we modify the reasoning in \cite{AC} in such a way that the revised canonical product $f$ has only simple zeros lying symmetrically in the left half-plane, where $f$ has the pre-given logarithmic order. The zeros of $f$ will be pairs of complex conjugates so that $f$ becomes real entire, and, in addition, $f(r)=M(r,f)$ holds. These properties are crucial for proving that $\delta_V(0,f)=1$. A  consequence of the proof of Theorem~\ref{main2} is that the zeros of the modified Anderson-Clunie function $f$ are uniformly logarithmically $q$-separated for any $q > \rho_{\log}(f) - 1$.  

\medskip

After preparations in Section~\ref{q-sepa-sec}, we prove Theorem~\ref{main1} in Section~\ref{proof-main1-sec}, while after  preparations in Section~\ref{uniform-logarithmic}, we prove Theorem~\ref{main2} in Section~\ref{proof-main2-sec}. For convenience, Section~\ref{sep zeros} contains information about the separation of zeros of entire functions. 

In Section~\ref{multiplicities-sec}, we discuss the possibilities for the sets $E_N(f)$ and $E_V(g)$ of the Nevanlinna and Valiron deficiencies of solutions $f,g$ of equations of the general form \eqref{lde}. It is shown how it follows from Gol'dberg's theorem that $E_N(f)$ can be countably infinite and $E_V(g)$ can be uncountable. The next section contains examples that illustrate a classical theorem of Wittich on Nevanlinna deficient values and a particular modified Wittich theorem for Valiron deficient values.


\section{Wittich's theorem}  \label{W}


On the topic of possible deficiencies of solutions of equations of the form \eqref{lde}, we recall the following well-known result of Wittich on Nevanlinna deficient values.

\bigskip
\noindent
\textbf{Wittich's theorem.} (\cite[Theorem~4.3]{Laine}, \cite{Wittich})
\emph{Suppose that a solution $f$ of \eqref{lde} is admissible in the sense that
\begin{equation}\label{admissible}
T(r,A_j)=o(T(r,f)),\quad j=0,\ldots,n-1,
\end{equation}
as $r\to\infty$ outside a possible exceptional set of finite linear measure. Then $0$ is the only possible finite Nevanlinna deficient value for $f$. In particular, this is true for transcendental solutions of \eqref{lde} with polynomial coefficients.}\\

The assumption on admissibility of $f$ cannot be removed in Wittich's theorem.
The next example shows that if the growth of at least one of the coefficients in \eqref{lde} is at least that of a solution $f$, then any $c\in\C$ can be a Picard value of $f$. 

\begin{example}\label{any c}
For an arbitrary $c\in\C$ and an arbitrary entire function $w(z)$, the function $f(z)=e^z+c$ with $c$ as its Picard value solves the equation
$$f^{(4)} + ce^{-z}f''' + w(z)f'' - w(z)f' - f = 0.$$
\end{example}

The following example shows that $0$ may or may not be a deficient value for an admissible solution. 

\begin{example}
The function $f(z)=\exp\left(z^2/2\right)$ is an admissible solution of
    $$
    f''' + e^zf'' + (e^z -ze^z - 1 - z^2)f' - (ze^z + e^z + 2z)f = 0,
    $$
which has $0$ as a deficient value. On the other hand, from
\cite[Example~5]{G-S-W}, the function
$g(z)=\exp\left(z^2\right)+e^z$ is an admissible solution of
    \begin{equation}\label{4th-order}
    \begin{split}
    f^{(4)} &+ (8z^3-13)f''-(16z^4+16z^3+12z^2+4z+2)f'\\
    &+ (16z^4+8z^3+12z^2+4z+14)f=0.
    \end{split}
    \end{equation}
We have $T(r,g)\sim N(r,1/g)$ as $r\to\infty$ by \cite[Satz~1-2]{Stein1}, so that $\delta_N(0,g)=0$. Observe that $f_1(z)=\exp\left(z^2\right)$ and $f_2(z)=e^z$ also satisfy \eqref{4th-order}, where $\delta_N(0,f_1)=\delta_N(0,f_2)=1$.
\end{example}

The next example gives an equation of the form \eqref{ldeAB} whose nontrivial solutions are all
admissible solutions with $0$ as a deficient value. 

\begin{example}
The functions $f(z)=\exp\left(z^2/2\right)\sin z$ and
$g(z)=\exp\left(z^2/2\right)\cos z$ discussed in \cite[p.~416]{Gundersen2} 
are linearly independent admissible solutions of
    \begin{equation}\label{example-equation}
    f''-2zf'+z^2f=0.
    \end{equation}
It follows that all the nontrivial solutions of \eqref{example-equation} are admissible and have $0$ as a deficient value. Moreover, if $w(z)$ is any entire function,
then $f$ and $g$ satisfy
    $$
    f''' + (w(z) - 2z)f'' + \left(z^2 - 2 - 2zw(z)\right)f' + \left(z^2w(z) + 2z\right)f = 0.
    $$
More examples of this kind can be generated by using \cite[Example 4]{Gundersen2}.
\end{example}

Next we note that the above examples for Nevanlinna deficient values are also examples for Valiron deficient values, since we
always have $\delta_N(c,f)\leq\delta_V(c,f)$. Although the set $E_N(f)$ is at most countable from the second fundamental theorem, the set $E_V(f)$ can be uncountable. A classical result of Ahlfors-Frostman \cite[p.~276]{Nevanlinna} shows that $E_V(f)$ always has logarithmic capacity zero. Improvements of this result are due to Hyllengren \cite{Hylle} in the finite order case and to Hayman \cite{Hayman2} in the infinite order case. See also \cite[Chapter~4]{GO}.

Wittich's theorem can easily be modified to concern Valiron deficient values. To achieve this, we need to avoid all exceptional sets, and thus the reasoning works only for finite-order solutions. This modified result is particularly valuable when $\rho(f) \leq 1/2$, as we know that $f$ does not have finite Nevanlinna deficient values in this case. Solutions of zero-order are also possible, provided that at least one of the coefficients is transcendental \cite{G-S-W}.

\medskip
\noindent
\textbf{Modified Wittich's theorem.}
\emph{Suppose that a finite-order solution $f$ of \eqref{lde}
satisfies \eqref{admissible} as $r\to\infty$ without an exceptional set.
Then $0$ is the only possible finite Valiron deficient value for $f$.}

\medskip

\begin{proof}
Let $c\in\C\setminus\{0\}$, and write \eqref{lde} in the form
    $$
    (f-c)^{(n)}+A_{n-1}(z)(f-c)^{(n-1)}+\cdots+A_0(z)(f-c)=-cA_0(z).
    $$
Using the standard lemma on the logarithmic derivative, the first main theorem,
and the assumption \eqref{admissible} without an exceptional set, we obtain from
    $$
    \frac{1}{f-c}=-\frac{1}{cA_0(z)}\left(A_0(z)+A_1(z)\frac{(f-c)'}{f-c}
    +\cdots+\frac{(f-c)^{(n)}}{f-c}\right)
    $$
that
    $
    m(r,f,c)=O\left(\log r\right)+o(T(r,f)),
    $
as $r\to\infty$ without an exceptional set. The property \eqref{admissible} guarantees that $f$ is transcendental, even if the coefficients are polynomials. Thus $m(r,f,c)=o(T(r,f))$ as $r\to\infty$ without an exceptional set. This proves that $c$ is not a Valiron deficient value of $f$.
\end{proof}

Theorems~\ref{main1} and \ref{main2} address a different question than that in Wittich's theorem and the modified Wittich's theorem. That said, we mention for independent interest that the proofs of Theorems~\ref{main1} and \ref{main2} do not reveal whether the indicated solution $f$ is an admissible solution of \eqref{ldeAB} or not.


\section{Gol'dberg's theorem and sets of deficiencies}\label{multiplicities-sec}


Any zero of a nontrivial solution of \eqref{lde} must have multiplicity $\leq n - 1$. 
Conversely, we have the following result.

\bigskip
\noindent
\textbf{Gol'dberg's theorem.} (\cite[p.~300]{problembook})
\emph{Let $f\not\equiv 0$ be an entire function whose zeros all have multiplicity at most $n - 1$, $n\in\N$. Then $f$ is a solution of some differential equation of the form \eqref{lde}.}

\bigskip
If $n=1$, then $f$ has no zeros, and the proof of Gol'dberg's theorem is trivial. For the convenience of the reader, we prove the case $n=2$.

\bigskip
\noindent
\emph{Proof of Gol'dberg's theorem for $n=2$.} Let $f\not\equiv 0$ be any entire function whose zeros are all simple. We assume that $f$ has at least one zero, since otherwise the proof is trivial. 

We construct entire functions $A(z)$ and $B(z)\not\equiv 0$ such that $f$ solves \eqref{ldeAB}. For $A(z)$ to be entire, at the zeros $z_k$ of $f$, $A(z)$ needs to solve the interpolation problem
	\begin{equation}\label{interpolation}
	A(z_k)=-f''(z_k)/f'(z_k)=\sigma_k,
	\end{equation}
where $\sigma_k\in\C$. Note that \eqref{interpolation} can always be solved: If $\{z_k\}$ is a finite sequence, then $A(z)$ can
be chosen to be the Lagrange interpolation polynomial, while if $\{z_k\}$ is an infinite sequence, then $A(z)$ can be constructed by means of Mittag-Leffler series. Let $\zeta\neq z_k$ be fixed. Along with \eqref{interpolation}, we may require that
    \begin{equation}\label{interpolation2}
	A(\zeta)\neq -f''(\zeta)/f'(\zeta).
	\end{equation}
This guarantees that $f''(z)+A(z)f'(z)\not\equiv 0$.
After an entire $A(z)$ satisfying \eqref{interpolation} and \eqref{interpolation2} has been found,
we define $B(z)$ by
	\begin{equation}\label{def-B}
	B(z)=-(f''(z)+A(z)f'(z))/f(z),
	\end{equation}
which is entire and $\not\equiv 0$. This completes the proof. \hfill$\Box$

\smallskip

\begin{remark}
If $f\not\equiv 0$ is any entire function, then for a suitable constant $c$, the function $g=f-c$ has all simple zeros. A consequence of this easy observation is that many properties of $f$, such as the number of deficient values, remain valid for $g$, and by Gol'dberg's theorem, $g$ solves some equation of the form \eqref{ldeAB}.
\end{remark}

\begin{example}\label{several-defvalues-ex}
(1) Solutions of \eqref{ldeAB} may have any pre-given finite number $q\geq 2$ of Nevanlinna
deficient values. Indeed, set
	$$
	f(z)=\int_0^ze^{-\zeta^q}\, d\zeta\quad\textnormal{and}\quad
	a_k=e^{2\pi ki/q}\int_0^\infty e^{-\zeta^q}\, d\zeta,
	$$
where $k=1,\ldots,q$. Then $f$ is entire, $\delta_N(a_k,f)=1/q$, and $\delta_N(c,f)=0$ whenever $c \not= a_k$ ($k=1,\ldots,q$), see \cite[pp.~46--47]{Hayman}. Since $f'(z)=e^{-z^q}$ has no zeros, we obtain that for any $c \not = a_k$, ($k = 1, \ldots , q$), the function $g = f - c$ has all simple zeros and exactly $q$ Nevanlinna deficient values, and by Gol'dberg's theorem, $g$ is a solution of some equation of the form \eqref{ldeAB}.

(2) Eremenko \cite[p.~132]{GO} proved that for any countable set $E\subset\C$ and any $\rho>1/2$, there exists an entire function $f$ of order $\rho$ for which $E_N(f)=E$. If $f$ is any such function, then for a suitable $c\in\C$, the function $g=f-c$ has only simple zeros and countably many Nevanlinna deficient values, and  $g$ solves an equation of the form \eqref{ldeAB}.

(3) Let $f$ be an entire function with uncountably many Valiron deficient values \cite[p.~118]{GO}. For a suitable $c\in\C$, $g=f-c$ has only simple zeros and uncountably many Valiron deficient values, and $g$ solves an equation of the form \eqref{ldeAB}.
\end{example}

Gol'dberg's theorem does not give information about the orders of the coefficients in \eqref{lde}. Thus, although Gol'dberg's theorem is useful in the above discussions, it cannot be used to prove the respective inequalities $\rho(f)\geq\rho(A)\geq\rho(B)$ and $\rho_{\log}(f)\geq\rho_{\log}(A)\geq\rho_{\log}(B)$ in Theorems~\ref{main1} and \ref{main2}.


\section{Separation of zeros} \label{sep zeros}


Even though the double inequalities $\rho(f)\geq\rho(A)\geq\rho(B)$ in Theorem~\ref{main1} occur for solutions of \eqref{ldeAB}, they do not always hold, as many examples show, including Examples~\ref{w-example} and \ref{Bank-ex}. 

\begin{example}\label{Bank-ex}
Let $\{z_n\}$ be the sequence defined by $z_{2n-1}=2^n$ and $z_{2n}=2^n+\veps_n$,
where $\{\veps_n\}$ is any fixed sequence satisfying
	$$
	0<\veps_n<\exp\left(-\exp\left(2^n\right)\right),\quad n\geq 1.
	$$
Thus the sequence $\{z_n\}$ has non-zero distinct points, and its exponent of convergence
is equal to zero (see Section~\ref{q-sepa-sec} below). Then the canonical product
	\begin{equation}\label{fun}
	f(z)=\prod_{n=1}^\infty\left(1-\frac{z}{z_n}\right)
	\end{equation}
is an entire function of order zero. Moreover,
    \begin{eqnarray}
    f'(z_k)&=&-\frac{1}{z_k}\prod_{n\neq k}\left(1-\frac{z_k}{z_n}\right),\label{first-derivative}\\
    f''(z_k)&=&\frac{2}{z_k}\sum_{m\neq k}\frac{1}{z_m}\prod_{j\neq k,m}\left(1-\frac{z_k}{z_j}\right),\nonumber
    \end{eqnarray}
so that
    \begin{equation}\label{target}
    \sigma_k=-\frac{f''(z_k)}{f'(z_k)}=2\sum_{n\neq k}\frac{1}{z_n-z_k}.
    \end{equation}
Then the reasoning in the proof of \cite[Corollary~1]{Bank} shows that
	$$
	|\sigma_{2k-1}|\geq\exp\left(\exp\left(|z_{2k-1}|\right)\right)+O(1).
	$$
Thus no finite order $A(z)$ can satisfy \eqref{interpolation}, even though $\rho(f)=0$.
Like before, we set $B(z)$ to be the function in \eqref{def-B}, and then $\rho(B)=\infty$.
\end{example}

Obviously, many zeros of the function $f$ in \eqref{fun} are close together. 
As stated in Section~\ref{intro}, the separation of zeros of the indicated solutions play
a key role in the proofs of Theorems~\ref{main1} and~\ref{main2}. For illustrative purposes, 
we discuss some examples regarding uniformly $q$-separated sequences (defined below).

First we recall a few concepts from \cite[Chapter V]{Tsuji}.
We say that an infinite sequence $\{z_n\}$ in $\C\setminus\{0\}$
with no finite limit points has a finite exponent of convergence $\lambda> 0$ if
$\{1/|z_n|\}\in\ell^{\lambda+\veps}\setminus\ell^{\lambda-\veps}$
for any $\veps\in (0,\lambda)$, while $\lambda=0$ if $\{1/|z_n|\}\in\ell^\veps$ for any $\veps>0$.
The genus of $\{z_n\}$ is the unique integer $p\geq 0$ satisfying
$\{1/|z_n|\}\in\ell^{p+1}\setminus\ell^{p}$.
If $\lambda\not\in\N\cup\{0\}$, then $p=\lfloor \lambda\rfloor$ ($=$ the integer part of $\lambda$),
while if $\lambda\in\N\cup\{0\}$, then either $p=\lambda$ or $p=\lambda-1$. In all
cases, $p\leq\lambda$. The Weierstrass convergence factors are
    $$
    e_0(z)=1\quad\textnormal{and}\quad e_k(z)=\exp\left(\sum_{j=1}^k\frac{z^j}{j}\right),
    $$
where $k\in\N$. If $\{z_n\}$ has finite genus $p\geq 0$, then the canonical product
    \begin{equation}\label{product-f}
    f(z)=\prod_{n=1}^\infty\left(1-\frac{z}{z_n}\right)e_p\left(\frac{z}{z_n}\right)
    \end{equation}
converges uniformly in compact subsets of $\C$, and hence represents an entire
function having zeros precisely at the points $z_n$. We have $p\leq\lambda\leq \rho(f)$.

Following an analogous definition in the unit disc \cite{GH}, we say that a sequence $\{z_n\}$
of finite genus $p$ is \emph{uniformly $q$-separated}
for $q\geq 0$ provided that there exists a constant $C>0$ such that
	\begin{equation}\label{separation}
	\inf_{k\in\N}\left\{e^{C|z_k|^q}
    \prod_{n\neq k}
    \left|1-\frac{z_k}{z_n}\right|\left|e_p\left(\frac{z_k}{z_n}\right)\right|\right\}>0.
	\end{equation}
An elementary differentiation of \eqref{product-f} followed by a substitution $z=z_k$ yields
    \begin{equation}\label{first-derivative2}
    \begin{split}
    f'(z) &= -\sum_{j=1}^\infty\frac{z^p}{z_j^{p+1}}e_p\left(\frac{z}{z_j}\right)
    \prod_{n\neq j}\left(1-\frac{z}{z_n}\right)e_p\left(\frac{z}{z_n}\right),\\
    f'(z_k) &=-\frac{e_p(1)}{z_k}
    \prod_{n\neq k}\left(1-\frac{z_k}{z_n}\right)e_p\left(\frac{z_k}{z_n}\right),
    \end{split}
    \end{equation}
so that we may write \eqref{separation} equivalently as
	\begin{equation}\label{equivalent}
	\inf_{k\in\N}\left\{|z_k|e^{C|z_k|^q}|f'(z_k)|\right\}>0.
	\end{equation}
The definition of a $q$-separated sequence in \cite{HL} assumes that $C=1$ in \eqref{equivalent}.

\begin{example}\label{q-example}
We prove that the zeros of the function $f$ in \eqref{fun} are not uniformly
$q$-separated for any $q\geq 0$. Let $q\geq 0$, and set $k=2n-1$. We have
    \begin{eqnarray*}
    |z_k||f'(z_k)| &=& \left|1-\frac{z_k}{z_{k+1}}\right|\prod_{j\neq k,\,k+1}
    \left|1-\frac{z_k}{z_j}\right|\\
    &=& \frac{z_{k+1}-z_k}{z_{k+1}}\cdot\exp\left(\sum_{j\neq k,\,k+1}
    \log\left|1-\frac{z_k}{z_j}\right|\right)\\
    &\leq & \frac{\veps_n}{2^n+\veps_n}\cdot\exp\left(\sum_{j\neq k,\,k+1}
    \log\left(1+\left|\frac{z_k}{z_j}\right|\right)\right)\\
    &\leq & \frac{\veps_n}{2^n}\cdot\exp\left(\sum_{j=1}^\infty
    \left|\frac{z_k}{z_j}\right|\right)\leq\frac{\veps_n}{2^n}\cdot\exp\left(K\left|z_k\right|\right)
    \end{eqnarray*}
for some constant $K>0$ independent of $k$. Thus, for every $C>0$,
    $$
    |z_k|e^{C|z_k|^q}|f'(z_k)|\leq\exp\Big(C2^{nq}+K2^n-n\log 2-\exp\left(2^n\right)\Big)\to 0
    $$
as $n\to\infty$ (or equivalently as $k\to\infty$).
\end{example}

\begin{remark}
In Example~\ref{Bank-ex}, we showed that the function $f$ in \eqref{fun} could not satisfy the conclusion in Theorem~\ref{main1}, and above we showed that the zeros of this function are not uniformly $q$-separated for any $q\geq 0$. In contrast, the zeros of the
indicated solution $f$ in Theorem~\ref{main1} (the Lindel\"of function $L_{\rho}$) are uniformly $q$-separated for every $q>\rho(f)$, see Section~\ref{proof-main1-sec}.
\end{remark}

For completeness, we construct an example of a uniformly $q$-separated sequence for $q>0$ and $\lambda=0$.

\begin{example}\label{ex1}
The sequence $\{z_n\}$ given by $z_n=2^n$ is uniformly $0$-separated and has
zero exponent of convergence \cite[p.~299]{HL}. For a fixed $q>0$, choose 
$\gamma_n\in\left[\min\left\{1/2, 2^n\exp\left(-2^{nq}\right)\right\},1\right)$,
and define $w_n=z_n+\gamma_n$. Then $\{w_n\}$ is also uniformly $0$-separated
and has zero exponent of convergence. Let $\{\zeta_n\}$ denote the union sequence 
$\{z_n\}\cup\{w_n\}$. Construct canonical products $P_1(z)$ and $P_2(z)$ with zero 
sequences $\{z_n\}$ and $\{w_n\}$, respectively, and define $P(z)=P_1(z)P_2(z)$. Then
a calculation in the spirit of Example~\ref{q-example}, with $P$ in place of $f$,
shows that $\{\zeta_n\}$ is uniformly $q$-separated. The details are omitted. 
\end{example}


\section{Preparations for the proof of Theorem~\ref{main1}}\label{q-sepa-sec}


The following auxiliary result is a modification of \cite[Corollary 3.3]{HL}
that is needed to find an entire $A(z)$ satisfying the interpolation problem \eqref{interpolation} 
in our proof of Theorem~\ref{main1}. As we see, the growth of such an $A(z)$ depends heavily on 
the uniform $q$-separation of the zeros of $f$. This needs to be taken into account
when proving the inequality $\rho(f)\geq\rho (A)$.

\begin{lemma}\label{interpolation-lemma}
Suppose that $\{z_n\}$ is an infinite sequence of nonzero points in $\C$
with finite exponent of convergence $\lambda$, and that $\{z_n\}$ is uniformly $q$-separated
for some $q\geq 0$ (and $C>0$). Let $\{\sigma_n\}$ be an infinite sequence of points in $\C$,
not necessarily distinct, and let $h:[0,\infty)\to[1,\infty)$ be a continuous and nondecreasing
function such that $|\sigma_n|\leq h(|z_n|)$ for $n\in\N$.

Then there exists an entire function $A(z)\not\equiv 0$ such that
    \begin{equation}\label{int-usualorder}
    A(z_n)=\sigma_n,\quad n\in\N,
    \end{equation}
and, for any given $\alpha>1$,
    \begin{equation}\label{gwth-usualorder}
    \rho(A)\leq\max\left\{\lambda,\limsup_{r\to\infty} \frac{\log I(\alpha r)}{\log r}\right\},
    \end{equation}
where $\displaystyle I(x)=\max_{e\leq t\leq x}\frac{\log\left(h(t)e^{Ct^q}\right)}{\log t}$
is a nondecreasing function for~\mbox{$x\geq e$.}
\end{lemma}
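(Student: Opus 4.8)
The plan is to construct $A(z)$ explicitly as a Mittag-Leffler-type interpolation series attached to the canonical product $f$ with zero sequence $\{z_n\}$, and then estimate its growth using the uniform $q$-separation hypothesis. Concretely, let $f$ be the canonical product \eqref{product-f} built from $\{z_n\}$, which has $\rho(f)\geq\lambda$, and write down the standard interpolation formula
$$
A(z)=\sum_{n=1}^\infty \frac{\sigma_n}{f'(z_n)}\cdot\frac{f(z)}{z-z_n}\cdot g_n(z),
$$
where $g_n(z)$ is a convergence-producing factor (an exponential of a truncated Taylor polynomial, or a power $(z/z_n)^{m_n}$ with $m_n$ chosen large enough) inserted so that the series converges locally uniformly; each term is entire because $z_n$ is a simple zero of $f$. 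One checks termwise that $A(z_n)=\sigma_n$, which gives \eqref{int-usualorder}, and that $A\not\equiv 0$ by choosing, if necessary, the free parameters so that $A$ does not vanish identically (e.g.\ the construction cannot force $A\equiv 0$ since the $\sigma_n$ are arbitrary; alternatively subsume this into the statement as in the paper's earlier interpolation discussion around \eqref{interpolation2}).

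The heart of the argument is the growth estimate \eqref{gwth-usualorder}. First I would bound $|f(z)|$ from above by the usual estimate $\log|f(z)|\leq K|z|^{\lambda+\veps}$ for the canonical product, and more importantly bound $1/|f'(z_n)|$ from below: by the reformulation \eqref{equivalent} of uniform $q$-separation, $|f'(z_n)|\geq c\,|z_n|^{-1}e^{-C|z_n|^q}$, so $1/|f'(z_n)|\leq c^{-1}|z_n|\,e^{C|z_n|^q}$. Combining this with $|\sigma_n|\leq h(|z_n|)$ shows that the coefficient of the $n$-th term is at most a constant times $|z_n|\,h(|z_n|)\,e^{C|z_n|^q}$, whose logarithmic size relative to $\log|z_n|$ is exactly governed by the function $I$. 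Then I would split the series at $|z_n|\approx \alpha^{-1}|z|$ (say $r=|z|$): for the "near/small" indices the factor $e^{C|z_n|^q}h(|z_n|)$ is controlled by $I(\alpha r)$ by definition of $I$, while the tail is handled by the rapid decay built into the factors $g_n(z)$ together with the convergence of $\sum |z_n|^{-\lambda-\veps}$. Carrying out this split and taking $\log\log$ yields
$$
\log^+\log^+ M(r,A)\leq \log\Big(\max\{\lambda,\ \tfrac{\log I(\alpha r)}{\log r}\}+\veps\Big)+\log\log r + O(1),
$$
which, letting $\veps\to 0$ and recalling $\alpha>1$ is arbitrary-but-fixed, gives \eqref{gwth-usualorder}. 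The monotonicity of $I$ on $[e,\infty)$ is immediate from the definition as a max over an expanding interval.

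The main obstacle I anticipate is the bookkeeping in the tail estimate: one must choose the convergence factors $g_n$ (their degrees, or the exponents $m_n$) large enough to beat the potentially fast-growing coefficients $|z_n| h(|z_n|) e^{C|z_n|^q}$, yet not so large that they inflate $\rho(A)$ beyond the bound \eqref{gwth-usualorder}. This is exactly the delicate point where the argument departs from \cite[Corollary~3.3]{HL}: there the separation constant is normalized ($C=1$) and $h$ is of a restricted type, whereas here $C$ and $h$ are general, so the exponent $m_n$ must be calibrated against $I$ rather than against a fixed power. The cleanest route is to take $m_n$ roughly proportional to $\log(h(|z_n|)e^{C|z_n|^q})/\log 2$ (so that $(1/2)^{m_n}$ cancels the coefficient on the circle $|z|=|z_n|/2$), verify absolute convergence for $|z|\le R$ for every $R$, and then re-examine the resulting contribution to $M(r,A)$; because $m_n/\log|z_n|$ is dominated by $I(|z_n|)$, no loss beyond \eqref{gwth-usualorder} occurs. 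Everything else — termwise interpolation, entireness, the crude upper bound for $f$, and the final $\limsup$ computation — is routine.
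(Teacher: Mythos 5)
Your construction is essentially identical to the paper's: the series
\[
A(z)=\sum_{n}\frac{\sigma_n}{f'(z_n)}\cdot\frac{f(z)}{z-z_n}\left(\frac{z}{z_n}\right)^{m_n}
\]
is precisely the paper's $A=PH$, with $P=f$ the canonical product and $H$ the Mittag--Leffler series from \cite[Eq.~(3.1)]{HL}, and in both cases the coefficient $c_n=\sigma_n/f'(z_n)$ is bounded via the $q$-separation reformulation \eqref{equivalent}, which is exactly why the quantity $h(t)e^{Ct^q}$ --- and hence $I$ --- appears.

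A few remarks on where your sketch and the paper diverge in detail. The paper does not re-derive the tail/growth estimate from scratch as you propose; it first replaces the pointwise majorant $g(t)=\log\bigl(h(t)e^{Ct^q}\bigr)/\log t + C_2$ by its running maximum $G(t)=\max_{e\le x\le t}g(x)$ (needed because monotonicity of $g$ can fail when $0\le q<1$), and then simply invokes \cite[Theorem~3.1]{HL}. You omit this monotonization step, which is one of the two things the paper explicitly flags as new relative to \cite[Corollary~3.3]{HL}. Second, your proposed exponent $m_n\approx\log\bigl(h(|z_n|)e^{C|z_n|^q}\bigr)/\log 2\sim I(|z_n|)\log|z_n|$ is larger by a factor $\sim\log|z_n|$ than the paper's $q_n\sim\alpha\,I(|z_n|)$; this costs an extra $(\log r)$-type factor in $\log M(r,A)$ but washes out after taking $\log(\cdot)/\log r$, so the final bound \eqref{gwth-usualorder} is unaffected --- it is only less economical. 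Third, the degenerate case $\sigma_n\equiv 0$ genuinely needs the fallback $A=P$ (your series would then be identically zero), and more generally the paper partitions the index set when some but not all $\sigma_n$ vanish (since the exponents $q_n$ in the Mittag--Leffler series are defined via $\log|c_n|$); your Lagrange form sidesteps the latter automatically because zero coefficients simply drop out, but you should state the all-zero fallback explicitly rather than wave at \eqref{interpolation2}, which belongs to the Gol'dberg discussion, not to this lemma. Finally, a small slip: the canonical product built on $\{z_n\}$ has $\rho(f)=\lambda$ (Borel), and it is the inequality $\rho(P)\le\lambda$ that is used to keep $\rho(A)\le\max\{\rho(P),\rho(H)\}$ under control; writing ``$\rho(f)\ge\lambda$'' points the wrong way.
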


\begin{proof}
Without loss of generality, we may suppose that $\{z_n\}$ is ordered according to increasing moduli.
Moreover, $I(x)$ is well defined and nondecreasing by continuity.
Let $P(z)$ be the canonical product of genus $p\leq \lambda$ having $\{z_n\}$ as its zero sequence.
By uniform $q$-separation, the points $z_n$ are simple, and
hence $\frac{1}{P'(z_n)}\in\C$ for any $n$.

(a) Suppose that $\sigma_n\neq 0$ for all $n$. Let $H(z)$ denote the Mittag-Leffler series in \cite[Eq.~(3.1)]{HL},
i.e.,
    \begin{equation}\label{H0}
 	H(z)=\sum_{n=1}^\infty\frac{c_n}{z-z_n}\left(\frac{z}{z_n}\right)^{q_n},
	\end{equation}
where $c_n=\frac{\sigma_n}{P'(z_n)}$ and $\{q_n\}$ is a sequence such that each $q_n\geq 0$
is the smallest integer satisfying
    \begin{equation*}
    q_n\geq \max\left\{ \alpha\left(\frac{\log |c_n|}{\log |z_n|}+p\right),
    \frac{\log\frac{|c_n|}{n}}{\log|z_n|}+p+1\right\}
	\end{equation*}
for $|z_n|>e$, and set $q_n=0$ otherwise.
Then $H(z)$ is meromorphic in $\C$ and $A=PH$ is an entire function
that satisfies \eqref{int-usualorder}. Hence
it suffices to prove \eqref{gwth-usualorder}. 

By the assumptions, there exists a constant $C_1>0$
such that
	$$
	|c_n|\leq C_1|z_n|h(|z_n|)e^{C|z_n|^q},\quad n\in\N.
	$$
Thus
	\begin{equation}\label{up-usualorder}
	\frac{\log |c_n|}{\log |z_n|}\leq
    \frac{\log\left(h(|z_n|)e^{C|z_n|^q}\right)}{\log |z_n|}+C_2, \quad |z_n|\geq e,
	\end{equation}
where $C_2>0$ is a constant. Set
    $$
    g(t)=\frac{\log\left(h(t)e^{Ct^q}\right)}{\log t}+C_2,\quad t\geq e,
    $$
so that the inequality
    $$
    \frac{\log |c_n|}{\log |z_n|}\leq g(|z_n|)
    $$
holds for every $n$ such that $|z_n|\geq e$. As in the proof of \cite[Corollary 3.3]{HL},
we would like to apply \cite[Theorem~3.1]{HL} next, but the monotonicity of $g(t)$ is not known,
in particular when $0\leq q<1$. Thus we replace $g(t)$ with
    $$
    G(t)=\max_{e\leq x\leq t}g(x),
    $$
which is a non-decreasing function by continuity. Now \cite[Theorem~3.1]{HL} gives us
	$$
	\rho(H)\leq \max\left\{\lambda,\limsup_{r\to\infty} \frac{\log
    G(\alpha r)}{\log r}\right\}.
    $$
Indeed, it is apparent from the proof of \cite[Theorem~3.1]{HL} that the finitely many
indices $n$ for which $|z_n|<e$ have no affect on this conclusion.
The assertion \eqref{gwth-usualorder}
then follows from $\rho(A)\leq \max\{\rho(P),\rho(H)\}$.

(b) Suppose that $\sigma_n=0$ for at least one $n$. If $\sigma_n=0$ for all $n$, we may choose $A(z)=P(z)$,
in which case \eqref{int-usualorder} and \eqref{gwth-usualorder} clearly hold. Hence we may suppose that $\sigma_n=0$ for
at least one index $n$ but not for all $n$. Let $\{s_n\}$ denote the subsequence of $\{\sigma_n\}$ consisting
of all the nonzero points. Let $\{z_n\}=\{\zeta_n\}\cup\{\xi_n\}$ be a partition of the sequence $\{z_n\}$ such
that each $\zeta_n$ corresponds to $s_n$. In other words, the interpolation problem \eqref{int-usualorder} is
transformed into finding an entire function $A(z)$ such that
	\begin{equation}\label{int2-usualorder}
	A(\zeta_n)=s_n\quad\text{and}\quad A(\xi_n)=0.
	\end{equation}
We factorize the canonical product $P(z)$ as $P(z)=R(z)S(z)$, where $R(\zeta_n)=0$ and $S(\xi_n)=0$.
Supposing that $\{s_n\}$ is an infinite sequence, let $H(z)$ denote the Mittag-Leffler series \eqref{H0},
where $c_n=\frac{s_n}{P'(\zeta_n)}\neq 0$. Then $A=PH$ is entire and satisfies \eqref{int2-usualorder}.
The growth condition \eqref{gwth-usualorder} is then proved just as in Part (a).
If $\{s_n\}$ is a finite sequence, then for $H(z)$ we choose a finite Mittag-Leffler series. Once again $A(z)=P(z)H(z)$
is entire and satisfies \eqref{int2-usualorder} along with $\rho(A)=\rho(P)=\lambda$.
\end{proof}

\begin{remark}
(a) The proof of Lemma~\ref{interpolation-lemma} differs from that of \cite[Corollary 3.3]{HL}
in two ways: The possibility that $\sigma_n=0$ for some $n$ is now included, and the growth of 
the majorant function $g(t)$ has been considered in more detail.

(b) Lemma~\ref{interpolation-lemma} will be applied to the target sequence
$\sigma_k=-f''(z_k)/f'(z_k)$ in \eqref{interpolation}, where $f$ is a canonical
product with genus $p\geq 0$. A very similar target sequence
appears in the proof of \cite[Theorem~1]{Bank}. We have the representation
    \begin{equation}\label{rep}
    \sigma_k=-\frac{2p}{z_k}+2\sum_{n\neq k}\left(\frac{z_k}{z_n}\right)^p\frac{1}{z_n-z_k},
    \end{equation}
which was proved for $p=0$ in Example~\ref{Bank-ex}. For $p\geq 1$, we have the first-order
derivatives \eqref{first-derivative2}. A second differentiation yields
    \begin{eqnarray*}
    f''(z_k)=&-&\frac{2p}{z_k^2}e_p(1)\prod_{j\neq k}
    \left(1-\frac{z_k}{z_j}\right)e_p\left(\frac{z_k}{z_j}\right)\\
    &+&\frac{2}{z_k}e_p(1)\sum_{n\neq k}\frac{z_k^p}{z_n^{p+1}}e_p\left(\frac{z_k}{z_n}\right)
    \prod_{j\neq n,k}\left(1-\frac{z_k}{z_j}\right)e_p\left(\frac{z_k}{z_j}\right),
    \end{eqnarray*}
which together with \eqref{first-derivative2} implies \eqref{rep}.

(c) Let $f$ be an entire function having simple zeros at the points $a_n$, and let $\{b_n\}$
be a target sequence. Then the Lagrange interpolation series
    $$
    L(z)=\sum_{n=1}^\infty\frac{b_nf(z)}{f'(a_n)(z-a_n)}
    $$
is an entire solution to the interpolation problem $L(a_n)=b_n$, provided that $L(z)$
converges uniformly in compact subsets of $\C$, see \cite[p.~195]{Levin}. In our case, however,
the target sequence is unbounded, and the zero-sequence $\{a_n\}$ is uniformly $q$-separated.
The latter means in general that $|f'(a_n)|$ can tend to zero exponentially as $n\to\infty$.
Thus it seems unlikely that the
Lagrange interpolation series could be used in proving Theorem~\ref{main1}, and the
use of Lemma~\ref{interpolation-lemma} instead seems to be justified.
\end{remark}


\section{Proof of Theorem~\ref{main1}}\label{proof-main1-sec}


The solution $f$ will be the Lindel\"of function $L_\rho$ of order $\rho>1/2$, which is a canonical product with simple zeros precisely at the points $z_n=-n^{1/\rho}$ ($n\geq 1$) on the negative real axis. We may write
\begin{equation}\label{L}
L_\rho(z)=\prod_{n=1}^\infty\left(1+\frac{z}{n^\alpha}\right)e_p\left(-\frac{z}{n^\alpha}\right),
\end{equation}
where $\alpha=1/\rho$ and $p=\lfloor\rho\rfloor$ ($=$ integer part of $\rho$) is the genus of $L_\rho$.
It is known (see \cite[p.~294]{EF} or \cite[p.~54]{Neva}) that
    $$
    \delta_N(0,L_\rho)=\left\{\begin{array}{rl}
    \displaystyle 1-\frac{|\sin (\pi\rho)|}{q+|\sin(\pi\rho)|},\ & q<\rho\leq q+1/2,\\[12pt]
    \displaystyle 1-\frac{|\sin (\pi\rho)|}{q+1},\ & q+1/2<\rho\leq q+1,
    \end{array}\right.
    $$
where $q\geq 0$ is an integer. It follows that $\delta_N(0,L_\rho)>0$ for every $\rho>1/2$. Since entire functions of order $\leq 1/2$ have no finite Nevanlinna deficient values, the Lindel\"of functions illustrate the sharpness of this inequality.

\begin{lemma}\label{asymptotic-lemma}
For $k,n\in\N$, $k>n$, and $\alpha>0$, we have
    \begin{eqnarray}
	\alpha (k-n)k^{\alpha-1} \leq k^\alpha-n^\alpha\leq\alpha (k-n)n^{\alpha-1},
    \quad &&0<\alpha\leq 1,\label{trivial-estimate}\\[3pt]
    \alpha (k-n)n^{\alpha-1} \leq  k^\alpha-n^\alpha\leq\alpha (k-n)k^{\alpha-1},
    \quad &&1\leq\alpha<\infty. \label{trivial-estimate2}
	\end{eqnarray}
In particular, for every $\alpha>0$, we have $k^\alpha-(k-1)^\alpha\sim \alpha k^{\alpha-1}$ as $k\to\infty$.
\end{lemma}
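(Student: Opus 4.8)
The plan is to obtain both chains of inequalities from a single application of the Mean Value Theorem and then read off the asymptotic relation by specializing to $n=k-1$. Fix $\alpha>0$ and $k>n\geq1$, and apply the Mean Value Theorem to $g(t)=t^\alpha$ on $[n,k]$: there exists $\xi\in(n,k)$ with
\begin{equation*}
k^\alpha-n^\alpha=g'(\xi)(k-n)=\alpha\,\xi^{\alpha-1}(k-n).
\end{equation*}
Everything now reduces to estimating $\xi^{\alpha-1}$, using that $t\mapsto t^{\alpha-1}$ is nonincreasing on $(0,\infty)$ when $0<\alpha\leq1$ and nondecreasing when $\alpha\geq1$.

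If $0<\alpha\leq1$, then from $n<\xi<k$ we get $k^{\alpha-1}\leq\xi^{\alpha-1}\leq n^{\alpha-1}$; multiplying through by $\alpha(k-n)>0$ gives \eqref{trivial-estimate}. If $\alpha\geq1$, then instead $n^{\alpha-1}\leq\xi^{\alpha-1}\leq k^{\alpha-1}$, and the same multiplication yields \eqref{trivial-estimate2}. (When $\alpha=1$ all four bounds collapse to $k-n$, consistently with both lines.) I would also note that the argument only uses $k>n>0$, so the restriction to integers is immaterial; alternatively one could bypass the Mean Value Theorem by writing $k^\alpha-n^\alpha=\int_n^k\alpha t^{\alpha-1}\,dt$ and estimating the integrand by its extreme values on $[n,k]$.

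For the asymptotic claim I would put $n=k-1$ in \eqref{trivial-estimate}--\eqref{trivial-estimate2}, so that $k-n=1$. In either case $k^\alpha-(k-1)^\alpha$ lies between $\alpha k^{\alpha-1}$ and $\alpha(k-1)^{\alpha-1}$, and since
\begin{equation*}
\frac{(k-1)^{\alpha-1}}{k^{\alpha-1}}=\left(1-\frac1k\right)^{\alpha-1}\longrightarrow1\qquad(k\to\infty),
\end{equation*}
the squeeze theorem gives $k^\alpha-(k-1)^\alpha\sim\alpha k^{\alpha-1}$.

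There is no genuine obstacle here; the only point requiring care is keeping track of the direction in which $t\mapsto t^{\alpha-1}$ is monotone, which reverses at $\alpha=1$ and is precisely what accounts for the interchange of $k^{\alpha-1}$ and $n^{\alpha-1}$ between \eqref{trivial-estimate} and \eqref{trivial-estimate2}.
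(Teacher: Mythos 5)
Your proof is correct and takes essentially the same approach as the paper: the paper sketches the argument via the identity $k^\alpha-n^\alpha=\alpha\int_n^k x^{\alpha-1}\,dx$ and the monotonicity of $x^{\alpha-1}$, which is precisely the integral variant you mention as an alternative to the Mean Value Theorem.
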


The crux of the proof of Lemma~\ref{asymptotic-lemma} is a simple identity
    $$
    k^\alpha-n^\alpha=\alpha\int_n^kx^{\alpha-1}\, dx,
    $$
where the integrand $x^{\alpha-1}$ is decreasing for $0<\alpha<1$ and non-decreasing for $\alpha\geq 1$.
We omit the details.

\begin{lemma}\label{target-growth-lem}
Let $L_\rho$ be the Lindel\"of function of order $\rho>1/2$ with zeros $z_k=-k^{1/\rho}$
of genus $p=\lfloor\rho\rfloor$.
Then the (target) sequence $\sigma_k=-L_\rho''(z_k)/L_\rho'(z_k)$
can be written as
    \begin{equation}\label{sigma}
    \sigma_k=\frac{2p}{k^{\alpha}}+2\sum_{n\neq k}\left(\frac{k}{n}\right)^{\alpha p}
    \frac{1}{k^\alpha-n^\alpha},
    \end{equation}
where $\alpha=1/\rho$. Moreover,
    $$
    |\sigma_k|=\left\{\begin{array}{ll}
    O(\log k),\ &p=0,\\[3pt]
    O\left(k^{\alpha(p-1)+1}\log k\right),\ &p\geq 1.
    \end{array}\right.
    $$
\end{lemma}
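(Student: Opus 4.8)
The plan is to establish the formula \eqref{sigma} first, then extract the growth estimates by splitting the sum over $n\neq k$ according to the sign of $k-n$ and applying Lemma~\ref{asymptotic-lemma}. For the formula, I would invoke part (b) of the Remark following Lemma~\ref{interpolation-lemma}: the general representation \eqref{rep} for a canonical product of genus $p$ reads $\sigma_k=-\frac{2p}{z_k}+2\sum_{n\neq k}(z_k/z_n)^p\frac{1}{z_n-z_k}$. Substituting the Lindel\"of zeros $z_k=-k^{\alpha}$ with $\alpha=1/\rho$, one gets $-2p/z_k=2p/k^{\alpha}$, while $(z_k/z_n)^p=(k/n)^{\alpha p}$ and $z_n-z_k=-n^{\alpha}+k^{\alpha}=k^{\alpha}-n^{\alpha}$, which yields \eqref{sigma} directly. (The case $p=0$ of \eqref{rep} was already checked in Example~\ref{Bank-ex}, and the computation of the first and second derivatives \eqref{first-derivative2} establishes it for $p\geq 1$, so I may quote it.)

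For the growth bound, write $\sigma_k=\frac{2p}{k^{\alpha}}+2(\Sigma_1+\Sigma_2)$, where $\Sigma_1$ is the sum over $1\leq n<k$ and $\Sigma_2$ the sum over $n>k$. Note first that $0<\alpha=1/\rho<2$, and the key sign fact is that $k^{\alpha}-n^{\alpha}$ is positive when $n<k$ and negative when $n>k$, so there is no cancellation within each partial sum. The leading term $2p/k^{\alpha}$ is $O(1)$ (indeed $o(1)$ for $p\geq 1$), so it is harmless in both cases. For $\Sigma_1$ (where $k>n$), I would bound $\frac{1}{k^{\alpha}-n^{\alpha}}$ from above using the lower estimates on $k^{\alpha}-n^{\alpha}$ in Lemma~\ref{asymptotic-lemma}: for $0<\alpha\leq 1$ this gives $k^{\alpha}-n^{\alpha}\geq\alpha(k-n)k^{\alpha-1}$, hence $\frac{(k/n)^{\alpha p}}{k^{\alpha}-n^{\alpha}}\leq\frac{k^{\alpha p}n^{-\alpha p}}{\alpha(k-n)k^{\alpha-1}}=\frac{k^{\alpha(p-1)+1}}{\alpha}\cdot\frac{n^{-\alpha p}}{k(k-n)}$; summing $\frac{1}{k-n}$ over $1\leq n<k$ produces a factor $O(\log k)$ and $n^{-\alpha p}\leq 1$, giving $\Sigma_1=O(k^{\alpha(p-1)+1}\log k)$ when $p\geq 1$, and $O(\log k)$ when $p=0$ since then $k^{\alpha(p-1)+1}=k^{1-\alpha}$ — wait, for $p=0$ the factor $(k/n)^{\alpha p}=1$ and the bound $\frac{1}{k^{\alpha}-n^{\alpha}}\leq\frac{1}{\alpha(k-n)k^{\alpha-1}}$ still has the extra $k^{1-\alpha}$, so for $p=0$ I should instead pair $n$ with $k$ more carefully: use $k^{\alpha}-n^{\alpha}\geq\alpha(k-n)n^{\alpha-1}$ when $\alpha\geq 1$ isn't available for all $\rho$, so the honest route for $p=0$ (i.e.\ $\rho\leq 1$, $\alpha\geq 1$) is to use \eqref{trivial-estimate2}: $k^{\alpha}-n^{\alpha}\geq\alpha(k-n)n^{\alpha-1}\geq\alpha(k-n)$ since $\alpha\geq 1$ forces $n^{\alpha-1}\geq 1$, whence $\Sigma_1=O(\log k)$; symmetric reasoning with $n>k$ handles $\Sigma_2$, using $n^{\alpha}-k^{\alpha}\geq\alpha(n-k)$ and summing $\frac{1}{n-k}$ — but $\sum_{n>k}\frac{1}{n-k}$ diverges, so I must truncate: for $n$ large, $n^{\alpha}-k^{\alpha}\geq\tfrac12 n^{\alpha}$ say, giving a convergent tail $\sum n^{-\alpha}<\infty$ since $\alpha\geq 1$ means... actually $\alpha=1$ is the boundary; the correct split is $k<n\leq 2k$ contributing $O(\log k)$ and $n>2k$ contributing $O(\sum_{n>2k}n^{-\alpha}k^{0})=O(1)$ using $\alpha>1/2$... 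I will organize this cleanly with the two-region split for each of $\Sigma_1,\Sigma_2$.

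For $p\geq 1$ and $\Sigma_2$ (where $n>k$), the factor $(k/n)^{\alpha p}\leq 1$ and $n^{\alpha}-k^{\alpha}\geq c\,n^{\alpha}$ for $n\geq 2k$ gives a tail $\sum_{n>2k}n^{-\alpha}$ which converges since $\alpha p\ge\alpha>1/2$... here I need $\alpha>1$, which fails for $\rho<1$; but $p\geq 1$ forces $\rho\geq 1$, i.e.\ $\alpha\leq 1$, so $\sum n^{-\alpha}$ with $\alpha\leq 1$ diverges — so for the tail I must keep the full factor $(k/n)^{\alpha p}=k^{\alpha p}n^{-\alpha p}$ and use $\alpha p\ge\alpha\cdot 1$, and if $p\geq 2$ then $\alpha p$ could exceed $1$; if $p=1$ then $\alpha p=\alpha\leq 1$ and $\frac{(k/n)^{\alpha}}{n^{\alpha}-k^{\alpha}}\leq\frac{k^{\alpha}n^{-\alpha}}{cn^{\alpha}}=\frac{k^{\alpha}}{c}n^{-2\alpha}$ with $2\alpha>1$ since $\alpha>1/2$, so the tail is $O(k^{\alpha})$; combined with $k^{\alpha(p-1)+1}=k^{1}$ for $p=1$ and $\alpha\le1$ so $k^{\alpha}\leq k\leq k^{\alpha(p-1)+1}$ — consistent. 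The near region $k<n\leq 2k$ contributes $O(k^{\alpha(p-1)+1}\log k)$ as with $\Sigma_1$.

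\begin{remark}
The main obstacle is the bookkeeping across the four regimes $(p=0$ vs $p\geq 1)\times(n<k$ vs $n>k)$, because $\alpha=1/\rho$ forces $\alpha\leq 1$ exactly when $p\geq 1$ and $\alpha\geq 1$ exactly when $p=0$ (modulo the boundary $\rho=1$), so the "convenient" inequality in Lemma~\ref{asymptotic-lemma} flips, and the tail sums $\sum_{n>k}$ only converge after keeping the decaying factor $(k/n)^{\alpha p}$ and using $\alpha>1/2$. Once the two-region split (near region $n\asymp k$ giving the $\log k$, far region giving a lower-order or absorbed term) is set up, each piece is a routine application of \eqref{trivial-estimate}--\eqref{trivial-estimate2} together with $\sum_{1\leq m<k}\tfrac1m=O(\log k)$ and a $p$-series comparison; I would present the $p=0$ and $p\geq 1$ cases separately for clarity and omit the elementary summation details.
\end{remark}
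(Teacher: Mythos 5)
Your strategy is sound and parallels the paper's own argument: derive the identity \eqref{sigma} from the general representation \eqref{rep} (as stated in the Remark following Lemma~\ref{interpolation-lemma}), then split the sum into $\Sigma_1$ over $n<k$ and $\Sigma_2$ over $n>k$, and control each piece via Lemma~\ref{asymptotic-lemma}. The main technical difference is how the tails are made summable. The paper replaces the sums by integrals (left/right endpoint rule), introduces the factor $\alpha x^{\alpha-1}$ inside the integrand to make the antiderivative explicit, changes variables $t=x/k$, and then splits the resulting $t$-integral at $t=2$; you instead apply \eqref{trivial-estimate}--\eqref{trivial-estimate2} termwise and compare with $\sum 1/|k-n|$ after a near/far split of $\Sigma_2$ at $n=2k$. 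Both routes give the same orders of magnitude, and yours trades the explicit antiderivative $\log\frac{1}{k^\alpha-x^\alpha}$ for one more case distinction.

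A few corrections to your bookkeeping. First, in the $\Sigma_1$ estimate for $p\geq 1$ you wrote
$\frac{k^{\alpha p}n^{-\alpha p}}{\alpha(k-n)k^{\alpha-1}}=\frac{k^{\alpha(p-1)+1}}{\alpha}\cdot\frac{n^{-\alpha p}}{k(k-n)}$;
the extra $k$ in the denominator is spurious, and the correct rewriting is $\frac{k^{\alpha(p-1)+1}}{\alpha}\cdot\frac{n^{-\alpha p}}{k-n}$, after which $n^{-\alpha p}\leq 1$ and $\sum_{n<k}\frac{1}{k-n}=O(\log k)$ give the stated $O\left(k^{\alpha(p-1)+1}\log k\right)$. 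Second, $\alpha p=p/\rho\leq 1$ always (since $p\leq\rho$), so the contingency ``if $p\geq 2$ then $\alpha p$ could exceed $1$'' never arises. Third, the clean and uniform reason the far-region tail converges is that $\alpha(p+1)=(p+1)/\rho>1$ for every $p\geq 0$, because $\rho<p+1$; this replaces the separate invocations of $\alpha>1/2$ (for $p=1$) and $\alpha>1$ (for $p=0$). With that observation, the far region of $\Sigma_2$ is
$k^{\alpha p}\sum_{n>2k}n^{-\alpha(p+1)}=O\left(k^{\alpha p}\cdot k^{1-\alpha(p+1)}\right)=O\left(k^{1-\alpha}\right)$,
which is absorbed into the claimed bound because $1-\alpha\leq\alpha(p-1)+1$ (equivalently $\alpha p\geq 0$). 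With these fixes your outline closes cleanly.
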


\begin{proof}
The representation \eqref{sigma} follows immediately from \eqref{rep}. We estimate
the growth of $|\sigma_k|$ in two steps.

(1) Suppose that $p=0$. Then $\rho\in (1/2,1)$, i.e., $\alpha\in (1,2)$. Using \eqref{sigma},
we get
    \begin{eqnarray*}
    |\sigma_k|/2 &\leq & \sum_{n\leq k-1}\frac{1}{k^\alpha-n^\alpha}
    +\sum_{n\geq k+1}\frac{1}{n^\alpha-k^\alpha}=:S_1(k)+S_2(k),
    \end{eqnarray*}
where $S_1(1)=0$.
Suppose that $k\geq 2$. Since $x\mapsto 1/(k^\alpha-x^\alpha)$ is an increasing function
for $x\in [1,k-1]$, the left endpoint rule and \eqref{trivial-estimate2} give
    \begin{eqnarray*}
    S_1(k) &\leq & \int_1^{k-1}\frac{dx}{k^\alpha-x^\alpha}+\frac{1}{k^\alpha-(k-1)^\alpha}\\
    &\leq &
    \int_1^{k-1}\frac{\alpha x^{\alpha-1}dx}{k^\alpha-x^\alpha}+\frac{1}{\alpha(k-1)^{\alpha-1}}
    =O\left(\log k\right).
    \end{eqnarray*}
Analogously, $x\mapsto 1/(x^\alpha-k^\alpha)$ is a decreasing function
for $x\geq k+1$, so the right endpoint rule and \eqref{trivial-estimate2} give
    \begin{eqnarray*}
    S_2(k) &\leq&
    \int_{k+1}^{\infty}\frac{dx}{x^\alpha-k^\alpha}+\frac{1}{(k+1)^\alpha-k^\alpha}\\
    &\leq&\frac{1}{k^{\alpha-1}}\int_{(k+1)/k}^{\infty}\frac{dt}{t^\alpha-1}+\frac{1}{\alpha k^{\alpha-1}},
    \end{eqnarray*}
where we made the change of variable $t=x/k$. It is clear that
    $$
    \int_2^\infty\frac{dt}{t^\alpha-1}\leq\frac{2^\alpha}{2^\alpha-1}\int_2^\infty\frac{dt}{t^\alpha}
    =\frac{2}{(\alpha-1)(2^\alpha-1)}.
    $$
Since $(t^\alpha-1)/(t^2-1)\to \alpha/2$ as $t\to 1^+$, there exists, by continuity, a constant
$\beta\in (0,\alpha/2)$ such that $t^\alpha-1\geq\beta(t^2-1)$
for all $t\in [1,2]$. Therefore
    $$
    \int_{(k+1)/k}^2\frac{dt}{t^\alpha-1}\leq \frac{1}{\beta}\int_{(k+1)/k}^2\frac{dt}{t^2-1}
    =\frac{1}{2\beta}\log\left(\frac{2k+1}{3}\right)\leq \frac{1}{2\beta}\log k.
    $$
This yields $S_2(k)=O\left(k^{1-\alpha}\log k\right)=o(1)$, and, a fortiori, $|\sigma_k|=O(\log k)$.

(2) Suppose that $p\geq 1$. Then $\rho\in [1,\infty)$, i.e., $\alpha\in (0,1]$. Using \eqref{sigma},
we get
    \begin{eqnarray*}
    |\sigma_k|/2 -\frac{p}{k^\alpha}&\leq &
    \sum_{n\leq k-1}\left(\frac{k}{n}\right)^{\alpha p}\frac{1}{k^\alpha-n^\alpha}
    +\sum_{n\geq k+1}\left(\frac{k}{n}\right)^{\alpha p}\frac{1}{n^\alpha-k^\alpha}\\
    &=:& T_1(k)+T_2(k),
    \end{eqnarray*}
where $T_1(1)=0$. Now $p\leq\rho<p+1$, so that
    \begin{equation}\label{ap}
    \alpha p=\frac{p}{\rho}\in\left(\frac{p}{p+1},1\right]
    \quad\textnormal{and}\quad
    \alpha (p+1)=\frac{p+1}{\rho}\in\left(1,\frac{p+1}{p}\right].
    \end{equation}
Suppose that $k\geq 2$. Then \eqref{trivial-estimate} yields
    \begin{eqnarray*}
    T_1(k) &\leq& \sum_{n\leq k-1}\frac{k^{\alpha p}}{k^\alpha-n^\alpha}
    \leq \int_1^{k-1}\frac{k^{\alpha p}}{k^\alpha-x^\alpha}\, dx
    +\frac{k^{\alpha p}}{k^\alpha-(k-1)^\alpha}\\
    &\leq& \frac{k^{\alpha(p-1)+1}}{\alpha}
    \left(\int_1^{k-1}\frac{\alpha x^{\alpha-1}}{k^\alpha-x^\alpha}\, dx+1\right)
    \leq \frac{k^{\alpha(p-1)+1}}{\alpha}
    \left(\log\frac{k}{\alpha}+1\right),
    \end{eqnarray*}
or $T_1(k)=O\left(k^{\alpha(p-1)+1}\log k\right)$.
The function $x\mapsto (x^{\alpha p}(x^\alpha-k^\alpha))^{-1}$ is strictly decreasing
for $x\geq k+1$, so that
    \begin{eqnarray*}
    T_2(k) &\leq& \int_{k+1}^\infty \frac{k^{\alpha p}}{x^{\alpha p}(x^\alpha-k^\alpha)}\, dx
    +\frac{1}{(k+1)^\alpha-k^\alpha}\\
    &\leq& k^{1-\alpha}\int_{(k+1)/k}^\infty
    \frac{dt}{t^{\alpha p}(t^\alpha-1)}+\frac{(k+1)^{1-\alpha}}{\alpha},
    \end{eqnarray*}
where we made the change of variable $t=x/k$.  It is clear that
    $$
    \int_2^\infty\frac{dt}{t^{\alpha p}(t^\alpha-1)}
    \leq\frac{2^\alpha}{2^\alpha-1}\int_2^\infty\frac{dt}{t^{\alpha(p+1)}}
    =C(\alpha,p)<\infty.
    $$
Analogously as in Part (1) of the proof, we obtain
    $$
    \int_{(k+1)/k}^2\frac{dt}{t^{\alpha p}(t^\alpha-1)}
    \leq \int_{(k+1)/k}^2\frac{dt}{t^\alpha-1}\leq\frac{1}{2\beta}\log k
    $$
for some $\beta\in (0,\alpha/2)$. This yields $T_2(k)=O\left(k^{1-\alpha}\log k\right)$.
The desired estimate for $|\sigma_k|$ follows from the estimates for $T_1(k)$ and $T_2(k)$.
\end{proof}

The estimates in the previous proof seem to have some flexibility. Hence the result
is unlikely to be sharp, but it is more than enough for our use nevertheless.

\begin{lemma}\label{ex1a}
Let $L_\rho$ be the Lindel\"of function of order $\rho\in (1/2,\infty)$
with zeros $z_k=-k^{1/\rho}$ of genus $p=\lfloor\rho\rfloor$. Then there exists a
constant $C>0$ such that
    \begin{eqnarray}
    \inf_k\left\{|z_k|e^{C|z_k|^\rho\log |z_k|^\rho}|L_\rho'(z_k)|\right\}&>&0.\label{assertion}
    \end{eqnarray}
In particular, the zero sequence of $L_\rho$
is uniformly $q$-separated for every $q>\rho$.
\end{lemma}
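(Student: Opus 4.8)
The plan is to estimate the product $\prod_{n\neq k}|1-z_k/z_n|\,|e_p(z_k/z_n)|$ from below by relating it to the modulus $|L_\rho'(z_k)|$ via \eqref{first-derivative2}, and to show that this product is bounded below by $\exp(-C|z_k|^\rho\log|z_k|^\rho)$ for a suitable $C>0$. Equivalently, by \eqref{equivalent}, it suffices to prove \eqref{assertion}. Since $|z_k|=k^\alpha$ with $\alpha=1/\rho$, so that $|z_k|^\rho=k$ and $\log|z_k|^\rho=\log k$, the assertion \eqref{assertion} is exactly the statement that there is a constant $C>0$ with
$$
\log|L_\rho'(z_k)| \geq -Ck\log k + O(\log k),\qquad k\to\infty.
$$
Once this is established, the final sentence follows immediately: for any $q>\rho$ we have $k\log k = |z_k|^\rho\log|z_k|^\rho = o(|z_k|^q)$, so $e^{C|z_k|^q}|z_k|\,|L_\rho'(z_k)|$ is bounded below, which is \eqref{equivalent} with exponent $q$.

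First I would use \eqref{first-derivative2} at $z=z_k$, which gives
$$
|z_k|\,|L_\rho'(z_k)| = |e_p(1)|\prod_{n\neq k}\left|1-\frac{z_k}{z_n}\right|\left|e_p\left(\frac{z_k}{z_n}\right)\right|
= |e_p(1)|\prod_{n\neq k}\left|1+\frac{k^\alpha}{n^\alpha}\,(-1)\cdot(-1)\right|\cdots,
$$
but more usefully, since all the $z_n=-n^\alpha$ are real and negative, each factor $1-z_k/z_n = 1 - k^\alpha/n^\alpha$ is real, and the exponential convergence factor $e_p(z_k/z_n)=\exp\bigl(\sum_{j=1}^p (k^\alpha/n^\alpha)^j/j\bigr)$ is real and positive. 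So I would take logarithms and split the sum over $n$ into the ranges $n\leq k-1$, $n=k\pm 1$ (the nearby factors), and $n\geq k+1$. The terms with $n\geq k+1$ and $n=k-1$ contribute factors that are bounded away from $0$ and $\infty$ in a controlled way (the "small" terms $k^\alpha/n^\alpha < 1$ combine with the convergence factors), using that $\sum 1/n^{\alpha}$ behaves well relative to $\alpha = 1/\rho$ and $p=\lfloor\rho\rfloor$. The dominant contribution to a possible smallness of the product comes from the range $1\leq n\leq k-1$, where $k^\alpha/n^\alpha$ is large; here I would bound $\sum_{n\leq k-1}\bigl|\log(1-k^\alpha/n^\alpha)\bigr|$ and $\sum_{n\leq k-1}\sum_{j=1}^p (k^\alpha/n^\alpha)^j/j$ from above. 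Using Lemma~\ref{asymptotic-lemma} to control $k^\alpha-n^\alpha$ and the crude estimate $\sum_{n\leq k-1}(k/n)^{\alpha j}=O(k\cdot k^{\alpha j - 1})$ when $\alpha j > 1$ (and $O(k\log k)$ when $\alpha j = 1$), together with $\alpha p \leq 1$, the total is $O(k^{\alpha p}\log k)=O(k\log k)$. Similarly $\sum_{n\leq k-1}|\log(1-k^\alpha/n^\alpha)|\leq \sum_{n\leq k-1}\log(k^\alpha/(k^\alpha-n^\alpha)) + O(k)$, which is again $O(k\log k)$ after applying Lemma~\ref{asymptotic-lemma}. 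Collecting all pieces gives $\log|L_\rho'(z_k)|\geq -C k\log k$ for large $k$, as required.

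The main obstacle I expect is bookkeeping: carefully separating the factor $|1-k^\alpha/n^\alpha|$ for the indices $n$ just below $k$ (where $k^\alpha-n^\alpha$ can be as small as $\asymp k^{\alpha-1}$, giving a logarithmically large negative contribution $-\log(k^\alpha - (k-1)^\alpha) = (1-\alpha)\log k + O(1)$ to the log — harmless) from the bulk sum, and making sure the interaction between the "$(1-z_k/z_n)$" logarithms and the convergence-factor exponents $\sum_{j=1}^p(z_k/z_n)^j/j$ is handled with the right sign. In fact a cleaner route, which I would prefer, is to observe that the quantity $\log|z_k|+\log|L_\rho'(z_k)|$ can be compared directly with the target growth $|\sigma_k|$: since $\sigma_k=-L_\rho''(z_k)/L_\rho'(z_k)$, the growth of $|L_\rho'(z_k)|$ is governed by the same sums estimated in the proof of Lemma~\ref{target-growth-lem}, and one can recycle the decomposition $S_1,S_2$ (for $p=0$) and $T_1,T_2$ (for $p\geq 1$) used there. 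This makes the upper bound $O(k^{\alpha(p-1)+1}\log k)=O(k\log k)$ transparent, and since $\log|L_\rho'(z_k)|$ is a sum of the same type (differing only by which terms get logarithms versus reciprocals), the bound $\log|L_\rho'(z_k)|\geq -Ck\log k$ drops out with essentially the same estimates. The secondary subtlety is verifying $e(1)=e_p(1)$ is a harmless positive constant and that the finitely many small indices cause no trouble — routine. The final "In particular" clause then follows from $|z_k|^\rho\log|z_k|^\rho=k\log k=o(|z_k|^q)$ for every $q>\rho$, as noted above.
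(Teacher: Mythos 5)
Your overall strategy --- rewriting \eqref{assertion} as $\log|L_\rho'(z_k)|\geq -Ck\log k+O(\log k)$ via \eqref{first-derivative2} and \eqref{equivalent}, splitting the product at $n=k$, invoking Lemma~\ref{asymptotic-lemma}, and (in your ``cleaner route'') recycling the $T_1,T_2$ decomposition from Lemma~\ref{target-growth-lem} --- matches the paper's. But there is a genuine gap: you misidentify where the product actually becomes small, and your sketch never addresses the part that produces the $k\log k$.

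You assert that the $n\geq k+1$ factors are ``bounded away from $0$ and $\infty$'' and that the ``dominant contribution to a possible smallness'' is from $n\leq k-1$. This is backwards. Already the single factor at $n=k+1$ is $1-k^\alpha/(k+1)^\alpha\sim\alpha/k\to 0$, and the full tail over $n>k$ decays like $\exp(-O(k\log k))$; that is precisely what forces the $\log$ into \eqref{assertion}. By contrast, for $n<k$ the convergence factors $e_p(k^\alpha/n^\alpha)>1$ only \emph{increase} the product, so bounding $\sum_{n<k}\sum_{j\leq p}(k^\alpha/n^\alpha)^j/j$ from above is aimed in the wrong direction (for a lower bound one may simply discard those positive terms), and in fact the paper shows the $n<k$ block $P_1(k)$ is $\geq 1$ when $p=0$ and $Q_1(k)\geq e^{-D_1 k}$ when $p\geq 1$ --- both negligible against $k\log k$.

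The piece you have not supplied is the tail estimate. For $0<x:=k^\alpha/n^\alpha<1$ there is the exact cancellation $\log\bigl((1-x)\,e_p(x)\bigr)=-\sum_{j>p}x^j/j$, hence $-\log Q_2(k)=\sum_{n>k}\sum_{j>p}\frac1j\bigl(k^\alpha/n^\alpha\bigr)^j$, and \emph{this} double sum must be shown to be $O(k\log k)$ (the $\log k$ coming from indices $n$ just above $k$). That is exactly what \eqref{prelim} and the reuse of the $T_2(k)$ estimate accomplish in the paper's proof; when $p=0$ the analogous bound on $P_2(k)^{-1}$ is the integral estimate culminating in $\exp(C_1k\log k)$. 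Until your write-up controls this tail explicitly --- rather than declaring it harmless --- the argument has a hole; once that is repaired, the remainder of your outline does go through and coincides with the paper.
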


\begin{proof}
Set $\alpha=1/\rho$ for brevity. As the final conclusion is trivial, it suffices
to prove \eqref{assertion}. We do this in two steps.

(1) Suppose that $\rho\in (1/2,1)$, i.e., $\alpha\in (1,2)$.
By appealing to \eqref{first-derivative} and \eqref{L}, we have
	\begin{eqnarray*}
	|z_k||L_\rho'(z_k)|=\prod_{n\leq k-1}\left(\frac{k^\alpha}{n^\alpha}-1\right)
	\prod_{n\geq k+1}\left(1-\frac{k^\alpha}{n^\alpha}\right)=:P_1(k)P_2(k),
	\end{eqnarray*}
where $P_1(1)=1$.
The products $P_1(k)$ and $P_2(k)$ converge for any finite $k$, so it suffices to
estimate them for arbitrary large values of $k$. We will make use
of \eqref{trivial-estimate2} in these estimates.

We estimate $P_1(k)$ downwards by
    \begin{eqnarray*}
    P_1(k)^{-1} &=& \prod_{n=1}^{k-1}\frac{n^\alpha}{k^\alpha-n^\alpha}=\exp\left(
    \sum_{n=1}^{k-1}\log \frac{n^\alpha}{k^\alpha-n^\alpha}\right)
    \leq  \exp\left(\sum_{n=1}^{k-1}\log \frac{n}{\alpha(k-n)}\right)\\
    &=&  \exp\left((k-1)\log\frac{1}{\alpha}+\sum_{n=1}^{k-1}\log \frac{n}{k-n}\right)
    = \exp\left(-(k-1)\log\alpha\right)\leq 1,
    \end{eqnarray*}
where $k\geq 2$.
When estimating $P_2(k)$ downwards, we apply the right endpoint rule to the decreasing function
$x\mapsto \log\left(1+k^\alpha/(x^\alpha-k^\alpha)\right)$ on $[k+1,\infty)$, and make
use of Part (1) in the proof of Lemma~\ref{target-growth-lem}. We conclude that   	
    \begin{eqnarray*}
	P_2(k)^{-1}&=& \prod_{n\geq k+1}\frac{n^\alpha}{n^\alpha-k^\alpha}
	= \exp\left(\sum_{n\geq k+1}\log\left(1+\frac{k^\alpha}{n^\alpha-k^\alpha}\right)\right)\\
    &\leq & \exp\left(\int_{k+1}^\infty \log\left(1+\frac{k^\alpha}{x^\alpha-k^\alpha}\right)\,dx
    +\log \frac{(k+1)^\alpha}{(k+1)^\alpha-k^\alpha}\right)\\
    &\leq & \exp\left(\int_{k+1}^\infty \frac{k^\alpha}{x^\alpha-k^\alpha}\,dx
    +\log \frac{(k+1)^\alpha k}{\alpha k^{\alpha}}\right)\\
    &\leq & \exp\left(k\int_{(k+1)/k}^\infty\frac{dt}{t^\alpha-1}+\log\frac{2^\alpha k}{\alpha}\right)
    \leq \exp\left(C_1k\log k\right)
    \end{eqnarray*}
for some $C_1=C_1(\alpha)>0$ and for all $k\geq 2$.

Finally, we combine the estimates for $P_1(k)$ and $P_2(k)$, and obtain
    \begin{eqnarray*}
    |z_k|e^{C|z_k|^\rho\log |z_k|^\rho}|L_\rho'(z_k)|=e^{Ck\log k}P_1(k)P_2(k)\geq 1,\ && k\geq 2,
    \end{eqnarray*}
where $C$ is any constant satisfying $C\geq C_1$. This completes the proof of \eqref{assertion}
in the case $\rho\in(1/2,1)$.

(2) Suppose that $\rho\in [1,\infty)$, i.e., $\alpha\in (0,1]$.
By appealing to \eqref{first-derivative2} and \eqref{L}, we have
	\begin{eqnarray*}
	\frac{|z_k||L_\rho'(z_k)|}{e_p(1)}
    &=&\prod_{n\leq k-1}\left(\frac{k^\alpha}{n^\alpha}-1\right)
    e_p\left(\frac{k^\alpha}{n^\alpha}\right)
	\prod_{n\geq k+1}\left(1-\frac{k^\alpha}{n^\alpha}\right)
    e_p\left(\frac{k^\alpha}{n^\alpha}\right)\\
    &=:&Q_1(k)Q_2(k),
	\end{eqnarray*}
where $Q_1(1)=1$.
The products $Q_1(k)$ and $Q_2(k)$ converge for any finite $k$, so it suffices to
estimate them for arbitrary large values of $k$. We make use of \eqref{trivial-estimate}
and \eqref{trivial-estimate2} in these estimates.

A trivial elimination of the Weierstrass convergence factor allows us to argue analogously as
in estimating $P_1(k)$ in Part (1) of the proof. In addition, we make use of
the inequality $n^n\leq e^nn!$, which is valid for all $n\in\N$. We have
    \begin{eqnarray*}
    Q_1(k)^{-1} &=& \prod_{n=1}^{k-1}\frac{n^\alpha}{k^\alpha-n^\alpha}
    e_p\left(\frac{k^\alpha}{n^\alpha}\right)^{-1}
    \leq \exp\left(\sum_{n=1}^{k-1}\log\frac{n^\alpha}{k^\alpha-n^\alpha}\right)\\
    &\leq & \exp\left(\sum_{n=1}^{k-1} \log\frac{kn^\alpha}{\alpha(k-n)k^\alpha}\right)\\
    &=&\exp\left((k-1)\log\frac{1}{\alpha}+(1-\alpha)\log\frac{k^{k-1}}{(k-1)!}\right)
    \leq \exp(D_1k),
    \end{eqnarray*}
where $k\geq 2$ and $D_1=\log\frac{1}{\alpha}+1-\alpha\geq 0$ for $0<\alpha\leq 1$.

Before estimating $Q_2(k)$, we make the preliminary manipulations
    \begin{equation}\label{prelim}
    \begin{split}
	Q_2(k)^{-1}&= \prod_{n\geq k+1}\frac{n^\alpha}{n^\alpha-k^\alpha}
    \exp\left(-\sum_{j=1}^p\frac{1}{j}\left(\frac{k^\alpha}{n^\alpha}\right)^j\right)\\
	&= \exp\left(\sum_{n=k+1}^\infty\left(\log\left(\frac{1}{1-k^\alpha/n^\alpha}\right)
    -\sum_{j=1}^p\frac{1}{j}\left(\frac{k^\alpha}{n^\alpha}\right)^j\right)\right)\\
    &= \exp\left(\sum_{n=k+1}^\infty\sum_{j=p+1}^\infty
    \frac{1}{j}\left(\frac{k^\alpha}{n^\alpha}\right)^j\right).
    \end{split}
    \end{equation}
A simple reasoning based on geometric series yields
    \begin{eqnarray*}
	Q_2(k)^{-1}&\leq& \exp\left(\sum_{n=k+1}^\infty \left(\frac{k^\alpha}{n^\alpha}\right)^{p+1}
    \sum_{j=0}^\infty\left(\frac{k^\alpha}{n^\alpha}\right)^j\right)\\
    &=&\exp\left(\sum_{n=k+1}^\infty \left(\frac{k}{n}\right)^{\alpha(p+1)}
    \frac{n^\alpha}{n^\alpha-k^\alpha}\right)\\
    &=&\exp\left(k^\alpha\sum_{n=k+1}^\infty \left(\frac{k}{n}\right)^{\alpha p}
    \frac{1}{n^\alpha-k^\alpha}\right).
    \end{eqnarray*}
The estimate for $T_2(k)$ in the proof of Lemma~\ref{target-growth-lem} then yields
$Q_2(k)^{-1}\leq \exp\left(D_2k\log k\right)$ for some $D_2=D_2(\alpha)>0$ and for all $k\geq 2$.

Finally, we combine the estimates for $Q_1(k)$ and $Q_2(k)$, and obtain
    \begin{equation*}
    |z_k|e^{C|z_k|^\rho\log |z_k|^\rho}|L_\rho'(z_k)|=e^{Ck\log k}Q_1(k)Q_2(k)\geq 1,
    \end{equation*}
where $C$ is any constant satisfying $C>D_1+D_2$ and $k\geq 2$ is large enough.
This completes the proof of \eqref{assertion} in the case $\rho\in [1,\infty)$.
\end{proof}

\medskip
\noindent
\emph{Proof of Theorem~\ref{main1}}.
After these preparations, the actual proof of Theorem~\ref{main1} is now easy. Let
$f=L_\rho$ be the Lindel\"of function of order $\rho\in (1/2,\infty)$, which has simple zeros
and the required growth. In addition, the zeros of $f$ are uniformly $q$-separated
for every $q>\rho$ by Lemma~\ref{ex1a}. For $K\geq 1$, define $h:[0,\infty)\to [1,\infty)$ by
	$$
	h(t)=\left\{\begin{array}{rl}
	K\log (t^\rho+e),\ & p=0,\\
	K\left(t^{\rho(\alpha(p-1)+1)}\log(t^\rho+e)+1\right),\ & p\geq 1.\end{array}\right.
	$$
Then
	$$
	h(|z_k|)=h(k^\alpha)=\left\{\begin{array}{rl}
	K\log (k+e),\ & p=0,\\
	K\left(k^{\alpha(p-1)+1}\log(k+e)+1\right),\ & p\geq 1.\end{array}\right.
	$$
By Lemma~\ref{target-growth-lem}, we may choose $K\geq 1$ so that $|\sigma_k|\leq h(|z_k|)$ for
all $k$. Moreover, for all $p\geq 0$, we have $\log h(t)=O(\log t)$. Thus
	$$
	I(x):=\max_{e\leq t\leq x}\frac{\log\left(h(t)e^{Ct^q}\right)}{\log t}
    \leq \max_{e\leq t\leq x}\log\left(h(t)e^{Ct^q}\right)=Cx^q+O(\log x),
	$$
and we conclude by
Lemma~\ref{interpolation-lemma} that there exists
an entire function $A(z)\not\equiv 0$ satisfying \eqref{interpolation} such that
	\begin{equation}\label{q}	
	\rho(A)\leq\max\{\rho(f),q\}. 
	\end{equation}
The zero sequence $\{z_n\}$ of $f$ that determines $A$ in Lemma~\ref{interpolation-lemma} is fixed, but
it is also uniformly $q$-separated for any $q>\rho(f)$ by Lemma~\ref{ex1a}. Thus the estimate
in \eqref{q} provided by Lemma~\ref{interpolation-lemma} holds for any $q>\rho(f)$, and, 
a fortiori, $\rho(A)\leq \rho(f)$.

Finally, we define the entire coefficient $B(z)$ by \eqref{def-B}, for
which $\rho(B)\leq\rho(A)$ holds by the lemma on the logarithmic derivative. It remains to show
that $B(z)\not\equiv 0$. Suppose on the contrary that $B(z)\equiv 0$, in which case \eqref{ldeAB}
reduces to $f''+A(z)f'=0$. A simple integration shows that
    $$
    f'(z)=\exp\left(\int^z A(\zeta)\, d\zeta\right).
    $$
Since $f$ (and hence $f'$) is of finite order, it follows that $A(z)$ is a polynomial.
This gives us $\rho(f)=\deg(A)+1\geq 1$. Since $\rho(f)\in\N$, it is clear by the definition of
the Lindel\"of function that $p=\rho(f)$. However, according to the representation \eqref{first-derivative2},
the derivative $f'$ should have a $p$-fold zero at the origin, which is a contradiction.
This completes the proof. \hfill$\Box$


\section{Preparations for the proof of Theorem~\ref{main2}}\label{uniform-logarithmic}


The separation of the zeros of the constructed solution $f$ of \eqref{ldeAB}
plays a key role in the proof of Theorem~\ref{main2}.
The zeros of $f$ are uniformly logarithmically $q$-separated (defined below)
for every $q>\rho_{\log}(f)-1$.

For basic properties of
entire (or, more generally, meromorphic) functions of finite logarithmic order, we refer to \cite{BP, Chern}. In
particular,  the logarithmic exponent of convergence of the zeros of an entire $f$ is given by
	$$
	\lambda_{\log}(f)=\limsup_{r\to\infty}\frac{\log n(r)}{\log\log r},
	$$
where $n(r)$ denotes the number of zeros of $f$ in $|z|<r$, counting multiplicities.
Similarly to the usual order, the functions $T(r,f)$ and $\log M(r,f)$
have the same logarithmic order $\rho_{\log}$, and $\rho_{\log}(f')=\rho_{\log}(f)$.
Differing from the usual order, where $\lambda(f)\leq\rho(f)$, we have $\rho_{\log}(f)=\lambda_{\log}(f)+1$.
This reflects the fact that polynomials are of logarithmic order one, and they have
only finitely many zeros.

We require a new concept on point
separation. We say that a sequence $\{z_n\}$
of finite genus $p$ is \emph{uniformly logarithmically $q$-separated}
for $q\geq 0$ provided that there exists a constant $C>0$ such that
	\begin{equation}\label{separation2}
	\inf_{k\in\N}\left\{e^{C(\log(1+|z_k|))^q}
    \prod_{n\neq k}
    \left|1-\frac{z_k}{z_n}\right|\left|e_p\left(\frac{z_k}{z_n}\right)\right|\right\}>0.
	\end{equation}
If $f$ is given by \eqref{product-f}, then using \eqref{first-derivative2},
we may write \eqref{separation2} equivalently as
	$$
	\inf_{k\in\N}\left\{|z_k|e^{C(\log (1+|z_k|))^q}|f'(z_k)|\right\}>0.
	$$
Certainly there exist sequences with zero exponent of convergence which are not uniformly
logarithmically $q$-separated for any $q\geq 0$. Moreover, uniformly logarithmically
$0$-separated sequences are uniformly $0$-separated se\-quences, and vice versa.

We will make use of this new separation concept for sequences of finite logarithmic exponent of convergence,
in which case $p=0$ and the corresponding $f$ reduces to the form \eqref{fun}.
The definition in \eqref{separation2} for general $p$ is given
for possible applications in the future. Moreover, one could replace the polynomial and
logarithmic weights by some monotonic function $\varphi:\R_+\to\R$, and discuss uniform $\varphi$-separation.

In the next example we will discuss the case $\lambda_{\log}=1$ and $q>0$.

\begin{example}\label{exa2}
Let $z_n=2^n$ and $w_n=z_n+\veps_n$, where $q>0$ and
    $$
    \veps_n=\min\left\{1/2, 2^n\exp\left(-(n\log 2)^q\right)\right\},\quad n\geq 1.
    $$
Then $z_n< w_n < z_{n+1}$, $w_{n+1}\geq z_{n+1}\geq \frac{3}{2}w_n$ and $w_n\leq z_n+\frac12\leq\frac43 z_n$. A simple
modification of the reasoning
in Example~\ref{ex1} shows that the combined sequence $\{z_n\}\cup\{w_n\}$ has logarithmic exponent
of convergence equal to one and the sequence is uniformly logarithmically $q$-separated.
\end{example}

To get an analogue of Lemma~\ref{interpolation-lemma}, we first modify
\cite[Theorem~3.1]{HL} to the case of finite logarithmic order. In the following
statement, special attention has been paid to those points $z_n$ that are near the origin,
as well as to minor monotonicity issues in \cite[(3.2)]{HL}.

\begin{lemma}\label{result1}
Suppose the following assumptions hold:
\begin{itemize}
\item[{\rm (a)}] $\{z_n\}$ is a sequence of distinct nonzero points in $\C$ with
            $\lambda_{\log}<\infty$.
\item[{\rm (b)}] $\{c_n\}$ is a sequence of nonzero points in $\C$, not necessarily distinct.
\item[{\rm (c)}] There exists a continuous function $g:[e,\infty)\to [1,\infty)$
			such that $\frac{\log |c_n|}{\log |z_n|}\leq g(|z_n|)$ for all $|z_n|\geq e$.
\item[{\rm (d)}] Given $\alpha >1$, $\{q_n\}$ is a sequence such that each $q_n\geq 0$ is the
           smallest integer satisfying
           \begin{equation}\label{smallest-integer}
           q_n\geq \max\left\{ \alpha\frac{\log |c_n|}{\log |z_n|},
	    \frac{\log\frac{|c_n|}{n}}{\log|z_n|}+1\right\}
		\end{equation}
		for $|z_n|>e$, and set $q_n=0$ otherwise.
\end{itemize}

Then
	\begin{equation}\label{H}
 	H(z)=\sum_{n=1}^\infty\frac{c_n}{z-z_n}\left(\frac{z}{z_n}\right)^{q_n}
	\end{equation}
is meromorphic in $\C$, and has simple poles exactly at the points
$z_{n}$ with residue $c_{n}$. Moreover, we have the growth estimates
	\begin{equation}\label{2inequalities}
	\lambda_{\log}+1\leq \rho_{\log}(H)\leq
	\max\left\{\lambda_{\log}+1, \limsup_{r\to\infty}
	\frac{\log G(\alpha r)}{\log\log r}+1\right\},
	\end{equation}
where $\displaystyle G(x)=\max_{e\leq t\leq x}g(t)$ is a nondecreasing function for $x\geq e$.
\end{lemma}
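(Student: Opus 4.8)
The plan is to mirror the structure of \cite[Theorem~3.1]{HL}, but to carry everything through with $\log\log r$ in place of $\log r$, since logarithmic order is the analogue of the usual order under the substitution $r\mapsto\log r$. First I would establish the easy lower bound in \eqref{2inequalities}: the function $H$ has simple poles at each $z_n$ with residue $c_n$, so the counting function of poles of $H$ grows at least as fast as $n(r)$, the counting function of the sequence $\{z_n\}$. By the Nevanlinna first main theorem $T(r,H)\geq N(r,H)+O(1)$, and $N(r,H)$ relates to $\int_0^r n(t)/t\,dt$. Taking $\log$ and dividing by $\log\log r$, the definition $\lambda_{\log}=\limsup\log n(r)/\log\log r$ yields $\rho_{\log}(H)\geq\lambda_{\log}+1$, using the identity $\rho_{\log}(f)=\lambda_{\log}(f)+1$ noted in the excerpt. (One must be slightly careful that the ``$+1$'' appears correctly; this is exactly the phenomenon that integrating a counting function against $dt/t$ costs a factor of $\log r$, hence adds $1$ to the logarithmic order.)

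For the upper bound, which is the substantive direction, I would first verify convergence and the pole structure of the series \eqref{H}. The role of the exponents $q_n$ is to force the tail terms to decay: for $|z|\leq\frac12|z_n|$ one has $\bigl|\frac{c_n}{z-z_n}(z/z_n)^{q_n}\bigr|\le \frac{2|c_n|}{|z_n|}2^{-q_n}$, and the lower bound \eqref{smallest-integer} on $q_n$ — specifically the term $\frac{\log(|c_n|/n)}{\log|z_n|}+1$ — is precisely what makes $|c_n|2^{-q_n}\lesssim n^{-1}|z_n|^{-1}\cdot(\text{something summable})$, giving local uniform convergence on $\C$ away from the $z_n$; hence $H$ is meromorphic with simple poles and residues $c_n$ as claimed. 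Here I would pay attention to the finitely many $n$ with $|z_n|<e$ (where $q_n=0$): these contribute only a fixed rational function and affect neither convergence nor the growth estimate, a point the excerpt flags as needing care and which parallels the remark made in the proof of Lemma~\ref{interpolation-lemma}.

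Next, for the growth estimate, fix $r$ large and split the sum into near terms ($|z_n|\le 2r$, say) and far terms ($|z_n|>2r$). On $|z|=r$: the far terms are bounded using the $q_n$-decay as above, summing to something of size at most a fixed power of $r$ times a convergent series, which is negligible on the $\log\log r$ scale. For the near terms, the first branch of \eqref{smallest-integer}, $q_n\ge\alpha\frac{\log|c_n|}{\log|z_n|}$, controls $|c_n|(r/|z_n|)^{q_n}\le|c_n|(r/|z_n|)^{\alpha\log|c_n|/\log|z_n|}$; combined with assumption (c), $\frac{\log|c_n|}{\log|z_n|}\le g(|z_n|)\le G(2r)$, one gets each near term bounded by $\exp\bigl(O(G(\alpha r)\log r)\bigr)$ up to lower-order factors, and there are at most $n(2r)$ of them, with $\log n(2r)=(\lambda_{\log}+o(1))\log\log r$. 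Assembling and taking $\log\log(\cdot)$ of $\log M(r,H)$ gives $\rho_{\log}(H)\le\max\{\lambda_{\log}+1,\ \limsup\frac{\log G(\alpha r)}{\log\log r}+1\}$, after using that $\log M(r,H)$ and $T(r,H)$ share the same $\rho_{\log}$. Replacing $g$ by its running maximum $G$ is what legitimizes the ``$\limsup$'' step despite $g$ itself not being monotone — the same device used in Lemma~\ref{interpolation-lemma}.

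The main obstacle I anticipate is bookkeeping the exponents precisely enough that the estimate on the near terms produces exactly $G(\alpha r)\log r$ in the exponent (hence $+1$ after the double logarithm) rather than something larger: one needs the inequality $|z_n|\le 2r$ to be converted into $\log|z_n|\le\log r+O(1)$ and then into $G(|z_n|)\le G(\alpha r)$ for the relevant $\alpha>1$, and one must ensure the choice of splitting radius ($2r$ versus $\alpha r$) is compatible with the $\alpha$ appearing in the statement. The correct phrasing — ``$G(\alpha r)$ for any $\alpha>1$'' — gives exactly the slack needed, and I would handle it by choosing the split at $\sqrt{\alpha}\,r$, say, so that both near and far estimates absorb into $G(\alpha r)$. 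Beyond this the argument is a routine, if lengthy, adaptation of \cite[Theorem~3.1]{HL}; I would streamline by citing that proof for the parts that transfer verbatim and only spelling out the places where $\log\log r$ replaces $\log r$ and where the small-modulus points $z_n$ require separate mention.
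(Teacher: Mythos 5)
Your plan correctly identifies the lower bound as routine, the role of the second branch of \eqref{smallest-integer} in forcing convergence, and the device of replacing $g$ by its running maximum $G$ to justify the $\limsup$. However, there is a genuine gap in the upper-bound argument, and it is not merely bookkeeping.

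You propose to bound $\log M(r,H)$ by estimating the near terms of $H$ directly on the circle $|z|=r$. But $H$ is meromorphic with poles at the $z_n$, so $M(r,H)$ is not the relevant quantity, and in any case the near terms $\frac{c_n}{z-z_n}\left(\frac{z}{z_n}\right)^{q_n}$ are not uniformly bounded on $|z|=r$: the factor $|z-z_n|^{-1}$ is unbounded as $z$ approaches a pole, and a circle $|z|=r$ may pass arbitrarily close to (or through) several $z_n$. Your sketch silently drops the $|z-z_n|^{-1}$ factor in the claimed bound $\exp\bigl(O(G(\alpha r)\log r)\bigr)$; this is exactly where an exceptional set is forced on you. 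The paper handles this with two interlocking devices that your sketch omits: (i) the Cartan-type estimate for the finite ``near'' sum $S_1$ holds only for $|z|\notin E\cup[0,1]$ with $E$ of finite logarithmic measure; and (ii) the product $PH$, where $P$ is the canonical product over $\{z_n\}$, is \emph{entire} (the zeros of $P$ cancel the poles of $H$), so one can estimate $M(r,PH)$ and then remove the exceptional set by the standard lemma \cite[Lemma~5]{Gundersen2}, which replaces $r$ by $\beta r$. The conclusion for $H$ then comes from $\rho_{\log}(H)\le\max\{\rho_{\log}(P),\rho_{\log}(PH)\}$ together with $\rho_{\log}(P)=\lambda_{\log}+1$. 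Without these steps the argument does not close.

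A smaller slip: you invoke the lower bound $q_n\ge\alpha\frac{\log|c_n|}{\log|z_n|}$ to claim $(r/|z_n|)^{q_n}\le(r/|z_n|)^{\alpha\log|c_n|/\log|z_n|}$, but for near terms $r/|z_n|\ge1$ this inequality is reversed. What you actually need, and what the paper uses, is the \emph{minimality} of $q_n$ in condition (d), which yields an upper bound of the form $q_n\le\alpha\bigl(\frac{\log|c_n|}{\log|z_n|}+1\bigr)+1\le\alpha(G(|z_n|)+1)+1$; only an upper bound on $q_n$ controls the near terms from above. This is fixable, but as written the inequality points the wrong way.
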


\begin{proof}
Without loss of generality, we may suppose that $\{z_n\}$ is ordered according to increasing moduli.
Moreover, $G(x)$ is a well defined nondecreasing function by continuity.

Let $\beta=\sqrt[3]{\alpha}$ ($>1$) and $e<R<\infty$. Suppose that $|z|=r\leq
R$, and write
	\begin{equation}\label{H2}
	\begin{split}
	H(z)&=\sum_{|z_n|\leq\beta R}\frac{c_n}{z-z_n}\left(\frac{z}{z_n}\right)^{q_n}
    +\sum_{|z_n|>\beta R}\frac{c_n}{z-z_n}\left(\frac{z}{z_n}\right)^{q_n}\\
	&=:S_1(z)+S_2(z).
	\end{split}
	\end{equation}
The expression $S_1(z)$ in \eqref{H2} is a finite sum, and therefore it
represents a rational meromorphic function in $\C$. Hence, in
order to prove that $H(z)$ is meromorphic in $\C$, it suffices to
show that $S_2(z)$ converges uniformly.
But this can be done analogously as in \cite[p.~293]{HL}
by making use of the fact that the genus of $\{z_n\}$ is $p=0$.

Obviously, all poles $z_n$ of $H(z)$ are simple and have residue $c_n$.

The inequality $\lambda_{\log}+1\leq \rho_{\log}(H)$ being clear
by $N(r,H)\leq T(r,H)$, it remains to
prove the second inequality in \eqref{2inequalities}.
This culminates in estimating $S_1(z)$. As $q_n\geq 0$ is the smallest integer
satisfying \eqref{smallest-integer}, it follows that
    \begin{eqnarray*}
    q_n &\leq& \alpha\left(\frac{\log|c_n|}{\log |z_n|}+1\right)+1\leq
    \alpha\left(g(|z_n|)+1\right)+1\\
    &\leq&  \alpha\left(G(|z_n|)+1\right)+1,\quad |z_n|\geq e.
    \end{eqnarray*}
Proceeding as in \cite[p.~294]{HL} (but replacing $g$ with $G$ to ensure monotonicity),
we can find a constant $C>0$ such that
	$$
	|S_1(z)|\leq Cn\left(\beta^2R\right)(\log R)^\beta R^{\alpha(G(\beta R)+1)},
	$$
provided that $|z|\leq R$ and $|z|\not\in E\cup [0,1]$, where $E\subset[1,\infty)$
has finite logarithmic measure.
Let $P$ be the canonical product associated with the sequence
$\{z_n\}$, and hence $\rho_{\log}(P)=\lambda_{\log}+1$. Then $PH$ is
entire, and
	$$
	|P(z)H(z)|\leq CM(r,P)\left( n\left(\beta^2R\right)(\log R)^\beta
	R^{\alpha(G(\beta R)+1)}+1\right),
	$$
provided that $|z|\leq R$ and $|z|\not\in E\cup [0,1]$. Taking now $R=\beta r$
and applying \cite[Lemma 5]{Gundersen2}, we see that there exists an
$r_0=r_0(\beta)>0$ such that
	$$
	M(r,PH)\leq CM(\beta r,P)\left(n(\beta^4r)
	\left(\log\left(\beta^2r\right)\right)^\beta(\beta^2r)^{\alpha(G(\alpha r)+1)}+1 \right)
	$$
for all $r\geq r_0$. Since $\log n(r)=O\left(\log\log r\right)$, we deduce that
	$$
	\rho_{\log}(PH)\leq \max\left\{\lambda_{\log}+1, \limsup_{r\to\infty}\frac{\log
	G(\alpha r )}{\log\log r}+1\right\}.
	$$
The second inequality in \eqref{2inequalities} follows from
$\rho_{\log}(H)\leq\max\{\rho_{\log}(P),\rho_{\log}(PH)\}$,
which holds by using standard reasoning.
\end{proof}

\begin{lemma}\label{interpolation-lemma2}
Suppose that $\{z_n\}$ is an infinite sequence of nonzero points in $\C$
with $\lambda_{\log}<\infty$, and that $\{z_n\}$ is uniformly logarithmically $q$-separated
for some $q\geq 0$ (and $C>0$). Let $\{\sigma_n\}$ be an infinite sequence of points in $\C$,
not necessarily distinct, and let $h:[0,\infty)\to [1,\infty)$ be a continuous and nondecreasing
function such that $|\sigma_n|\leq h(|z_n|)$ for $n\in\N$.

Then there exists an entire function $A(z)\not\equiv 0$ such that
    \begin{equation}\label{int}
    A(z_n)=\sigma_n,\quad n\in\N,
    \end{equation}
and, for any given $\alpha>1$,
    \begin{equation}\label{gwth}
    \rho_{\log}(A)\leq\max\left\{\lambda_{\log}+1,\limsup_{r\to\infty} \frac{\log
    F(\alpha r)}{\log\log r}+1\right\},
    \end{equation}
where $\displaystyle F(x)=\max_{e\leq t\leq x}\frac{\log\left(h(t)e^{C\left(\log(1+t)\right)^q}\right)}{\log t}$
is a nondecreasing function for~\mbox{$x\geq e$.}
\end{lemma}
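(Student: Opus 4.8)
The plan is to follow the template of Lemma~\ref{interpolation-lemma} almost verbatim, but substituting the logarithmic-order machinery of Lemma~\ref{result1} for the finite-order machinery of \cite[Theorem~3.1]{HL}. Since we have already arranged that in the logarithmic-order setting $p=0$ and the canonical product $P(z)$ associated with $\{z_n\}$ has genus zero with $\rho_{\log}(P)=\lambda_{\log}+1$, the interpolating function will again be sought in the form $A=PH$, where $H$ is the Mittag-Leffler series \eqref{H} of Lemma~\ref{result1}. The interpolation identity $A(z_n)=\sigma_n$ will be automatic once we set the residues $c_n=\sigma_n/P'(z_n)$, exactly as in part~(a) of the proof of Lemma~\ref{interpolation-lemma}, because $H$ has simple poles at the $z_n$ with residue $c_n$ and $P$ has simple zeros there.

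First I would dispose of the degenerate cases as in part~(b) of Lemma~\ref{interpolation-lemma}: if $\sigma_n=0$ for all $n$, take $A=P$; if $\{\sigma_n\}$ has only finitely many nonzero terms, use a finite Mittag-Leffler sum; otherwise pass to the subsequence of nonzero $\sigma_n$ and a corresponding partition $\{z_n\}=\{\zeta_n\}\cup\{\xi_n\}$, factor $P=RS$ accordingly, and reduce to the case $\sigma_n\neq 0$ for all $n$. Then, in the main case, I would translate the uniform logarithmic $q$-separation hypothesis into a bound on the residues: the equivalent form of \eqref{separation2} gives a constant $C_1>0$ with $|c_n|=|\sigma_n|/|P'(z_n)|\leq C_1|z_n|\,h(|z_n|)\,e^{C(\log(1+|z_n|))^q}$ for all $n$, hence
    $$
    \frac{\log|c_n|}{\log|z_n|}\leq\frac{\log\!\left(h(|z_n|)e^{C(\log(1+|z_n|))^q}\right)}{\log|z_n|}+C_2
    $$
for $|z_n|\geq e$, with $C_2>0$ a constant. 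Setting $g(t)=\dfrac{\log\!\left(h(t)e^{C(\log(1+t))^q}\right)}{\log t}+C_2$ on $[e,\infty)$ (continuous, valued in $[1,\infty)$ after enlarging $C_2$ if necessary), hypothesis~(c) of Lemma~\ref{result1} is verified. Applying Lemma~\ref{result1} with this $g$ and the prescribed $\{q_n\}$, and noting as in the proof of Lemma~\ref{interpolation-lemma} that the finitely many indices with $|z_n|<e$ do not affect the conclusion, yields
    $$
    \rho_{\log}(H)\leq\max\left\{\lambda_{\log}+1,\ \limsup_{r\to\infty}\frac{\log G(\alpha r)}{\log\log r}+1\right\},\qquad G(x)=\max_{e\leq t\leq x}g(t).
    $$

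To finish, I would absorb the additive constant $C_2$ and identify $G$ with the stated majorant $F$: since $F(x)=\max_{e\leq t\leq x}\bigl(g(t)-C_2\bigr)$ and both are nondecreasing, $G(x)=F(x)+C_2$, so the constant disappears in the $\log\log r$-normalized limsup and we get $\rho_{\log}(H)\leq\max\{\lambda_{\log}+1,\ \limsup_{r\to\infty}\frac{\log F(\alpha r)}{\log\log r}+1\}$. Finally $\rho_{\log}(A)\leq\max\{\rho_{\log}(P),\rho_{\log}(H)\}=\max\{\lambda_{\log}+1,\rho_{\log}(H)\}$ gives \eqref{gwth}, and $A\not\equiv 0$ since $A=PH$ with $H\not\equiv 0$. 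I expect the only real point requiring care — rather than a genuine obstacle — to be the bookkeeping in part~(b) (the partition/factorization when some but not all $\sigma_n$ vanish) and the verification that $F$ is well defined and nondecreasing; the analytic heart of the argument has already been isolated in Lemma~\ref{result1}, so this lemma is essentially a packaging step, and in the intended application (a canonical product of the form \eqref{fun} with $p=0$) these complications will not even arise.
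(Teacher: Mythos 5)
Your proposal is correct and follows essentially the same route as the paper: set $A=PH$ with residues $c_n=\sigma_n/P'(z_n)$, reduce to the case $\sigma_n\neq 0$ exactly as in part~(b) of Lemma~\ref{interpolation-lemma}, translate the uniform logarithmic $q$-separation into the bound $|c_n|\leq C_1|z_n|h(|z_n|)e^{C(\log(1+|z_n|))^q}$, feed the resulting majorant $g$ into Lemma~\ref{result1}, and finish with $\rho_{\log}(A)\leq\max\{\rho_{\log}(P),\rho_{\log}(H)\}$. Your explicit observation that $G=F+C_2$ and that the additive constant washes out in the $\log\log r$-normalized limsup is a small but welcome bit of extra care the paper glosses over.
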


\begin{proof}
Without loss of generality, we may suppose that $\{z_n\}$ is ordered according to increasing moduli.
Moreover, $F(x)$ is well defined and nondecreasing by continuity.
Let $P(z)$ be the canonical product having $\{z_n\}$ as its zero sequence.
By uniform logarithmic $q$-separation, the points $z_n$ are simple, and
hence $\frac{1}{P'(z_n)}\in\C$ for any $n$.

(a) Suppose that $\sigma_n\neq 0$ for all $n$. If $H(z)$ denotes the Mittag-Leffler series in \eqref{H},
where $c_n=\frac{\sigma_n}{P'(z_n)}$, then $A=PH$ is entire and satisfies \eqref{int}. Hence
it suffices to prove \eqref{gwth}. By the assumptions, there exists a constant $C_1>0$ such that
	$$
	|c_n|\leq C_1|z_n|h(|z_n|)e^{C\left(\log(1+|z_n|)\right)^q},\quad n\in\N.
	$$
Thus
	\begin{equation}\label{up}
	\frac{\log |c_n|}{\log |z_n|}\leq
    \frac{\log\left(h(|z_n|)e^{C\left(\log(1+|z_n|)\right)^q}\right)}{\log |z_n|}+C_2,\quad |z_n|\geq e,
	\end{equation}
where $C_2>0$ is a constant. Choosing $g(t)=\frac{\log\left(h(t)e^{C\left(\log(1+t)\right)^q}\right)}{\log t}+C_2$
for $t\geq e$, we deduce
by Lemma~\ref{result1} that
	$$
	\rho_{\log}(H)\leq \max\left\{\lambda_{\log}+1,\limsup_{r\to\infty} \frac{\log
    F(\alpha r)}{\log\log r}+1\right\},
    $$
where $\alpha>1$ and $\displaystyle F(x)=
\max_{e\leq t\leq x}\frac{\log\left(h(t)e^{C\left(\log(1+t)\right)^q}\right)}{\log t}$. The assertion \eqref{gwth}
then follows from $\rho_{\log}(A)\leq \max\{\rho_{\log}(P),\rho_{\log}(H)\}$.

(b) Suppose that $\sigma_n=0$ for at least one $n$. If $\sigma_n=0$ for all $n$, we may choose $A(z)=P(z)$,
in which case \eqref{int} and \eqref{gwth} clearly hold. Hence we may suppose that $\sigma_n=0$ for
at least one index $n$ but not for all $n$. But this case is analogous to Part (b) in the proof of
Lemma~\ref{interpolation-lemma}.
\end{proof}


\section{Proof of Theorem~\ref{main2}}\label{proof-main2-sec}


Anderson and Clunie proved the following result \cite[Theorem~2]{AC}: \emph{Given any continuous function $\varphi(r)$
tending monotonically to infinity as $r\to\infty$, no matter how slowly, there exists an
entire function $f$ such that $\log M(r,f)=O\left(\varphi(r)(\log r)^2\right)$ and $\delta_V(0,f)=1$.}

The solution $f$ to the problem above has real zeros with multiplicities tending to infinity. Moreover,
the proof given in \cite{AC} lies upon a technical restriction $\varphi(r)=O(\log r)$, which
yields $\rho_{\log}(f)\in (2,3)$.
Next we present a modification of the Anderson-Clunie reasoning such that
the solution $f$ has only simple zeros, and hence it is suitable for being a solution of \eqref{ldeAB}.
In addition, $f$ has arbitrary pre-given logarithmic order on $(2,\infty)$.

\begin{lemma}\label{AC-example}
For every $\rho\in(2,\infty)$ there exists an entire function $f$ with simple zeros such that
$\rho_{\log}(f)=\rho$ and $\delta_V(0,f)=1$.
\end{lemma}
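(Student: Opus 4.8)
The plan is to adapt the Anderson--Clunie canonical product in \cite[Theorem~2]{AC} by
``unfolding'' each multiple negative zero into a cluster of simple zeros that are complex
conjugate in pairs, so that the resulting $f$ is real entire, has $f(r)=M(r,f)$, retains
the prescribed logarithmic growth, and still has $\delta_V(0,f)=1$. First I would fix
$\rho\in(2,\infty)$ and recall the Anderson--Clunie construction: one chooses a rapidly
increasing sequence of radii $r_j$ and assigns a large multiplicity $m_j$ to a zero placed
near $-r_j$ on the negative real axis, so that $n(r)$ jumps by $m_j$ at $r_j$; the point
is that between the $r_j$ the counting function is essentially constant, which forces
$N(r,1/f)=o(T(r,f))$ along a sequence of radii and yields $\delta_V(0,f)=1$, while the
total growth is controlled by $\log M(r,f)=O(\varphi(r)(\log r)^2)$. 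To hit a pre-given
$\rho>2$ rather than only $\rho\in(2,3)$, I would drop the artificial restriction
$\varphi(r)=O(\log r)$ and instead choose $\varphi(r)$ (equivalently the data $r_j,m_j$)
so that $\rho_{\log}(f)=\lambda_{\log}(f)+1=\rho$ exactly; since $\rho_{\log}(f)$ is governed
by the growth rate of $n(r)$ on a logarithmic scale, I would arrange
$\log n(r_j)\sim(\rho-1)\log\log r_j$, e.g.\ by taking $\log r_j$ to grow like a suitable
geometric sequence and $m_j$ polynomial in $\log r_j$ of the right exponent.

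Next I would replace the single zero of multiplicity $m_j$ near $-r_j$ by $m_j$ distinct
simple zeros. The key requirement is that the cluster be tight enough that its canonical
product is indistinguishable, up to bounded factors on $|z|\sim r_j$ and up to a negligible
contribution to $\log M(r,f)$, from $(1+z/r_j)^{m_j}$; concretely I would place the simple
zeros at points $-r_j\exp(\pm i\theta_{j,\ell})$ (or $-r_j+\eta_{j,\ell}$ with tiny complex
perturbations) with $\ell=1,\dots,\lfloor m_j/2\rfloor$ and angular spacings $\theta_{j,\ell}$
chosen extremely small --- small enough, say, that the diameter of the cluster is
$o(r_j\exp(-\exp(r_j)))$-type small relative to everything else, in the spirit of
Example~\ref{Bank-ex} and Example~\ref{exa2}. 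Taking the zeros in conjugate pairs makes the
canonical product real on the reals; and, because the clusters sit on (or infinitesimally
near) the negative axis, one checks $f(r)=M(r,f)$ for real $r>0$ just as for the Lindel\"of
function $L_\rho$, since each factor $|1+z/z_n|$ and $|e_0(z/z_n)|$ is maximized on the
positive real axis when $z_n$ is near the negative axis. The genus is $p=0$ here because
$\lambda_{\log}<\infty$ forces zero exponent of convergence in the ordinary sense, so $f$
has the simple form \eqref{fun}.

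I would then verify the three conclusions in turn. For $\rho_{\log}(f)=\rho$: the
perturbation does not change $n(r)$ except on a set of radii of zero logarithmic length, so
$\lambda_{\log}(f)$ is unchanged, and $\rho_{\log}(f)=\lambda_{\log}(f)+1=\rho$ by the
identity recalled before \eqref{separation2}; the upper bound on $\log M(r,f)$ survives
because replacing $(1+z/r_j)^{m_j}$ by the clustered product changes $\log|f|$ by only a
bounded amount on each annulus of interest (the clusters are far tighter than the scale on
which $M$ varies). For $\delta_V(0,f)=1$: since $f(r)=M(r,f)$ and $f$ is real entire with
all zeros near the negative axis, $T(r,f)\sim\log M(r,f)=\log f(r)$, while $N(r,1/f)$ is
still the Anderson--Clunie step function that is $o(\log M(r,f))$ along the sequence $r_j^-$
(or along suitable radii just below the next jump), so $\liminf_{r\to\infty}N(r,1/f)/T(r,f)=0$,
i.e.\ $\delta_V(0,f)=1$. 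Finally simplicity of the zeros is automatic by construction. The
main obstacle I anticipate is the bookkeeping in the previous paragraph: proving that the
clustered product is genuinely within a bounded factor of $(1+z/r_j)^{m_j}$ uniformly on
the annuli that matter --- one must estimate $\sum_{\ell}\log|{\,}(z-z_{j,\ell})/(z+r_j)\,|$
over a cluster of up to $m_j$ points and show it is $O(1)$ on $|z|\in[r_{j-1},r_{j+1}]$ say,
which forces a careful choice of how the angular spacings $\theta_{j,\ell}$ depend on
$m_j$, $r_j$, and the ambient scale --- together with checking that this same choice is
compatible with the uniform logarithmic $q$-separation asserted for $q>\rho_{\log}(f)-1$,
which will be needed when this $f$ is later fed into Lemma~\ref{interpolation-lemma2}.
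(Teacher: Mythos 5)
Your plan captures the right high-level idea---replace the multiple negative zeros of the Anderson--Clunie product by conjugate-pair simple zeros so $f$ is real entire with $f(r)=M(r,f)$, tune the radii $b_n$ and multiplicities $c_n$ so that $\lambda_{\log}=\rho-1$ hence $\rho_{\log}(f)=\rho$, and run the Anderson--Clunie $N$ versus $T$ comparison on the sparse sequence $b_n$ to get $\delta_V(0,f)=1$. All of that matches the paper.

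The one substantive error is your choice of how to distribute the simple zeros. You want the cluster of $c_n$ points near $-b_n$ to be so tight (diameter $o(b_n\exp(-\exp(b_n)))$) that the factor is ``indistinguishable from $(1+z/b_n)^{c_n}$''. This instinct is both unnecessary and, for the intended application, fatal. It is unnecessary because one does not need the simple-zero polynomial to approximate $(1+z/b_n)^{c_n}$ up to a bounded factor: the paper places the $c_n$ zeros on the circle $|z|=b_n$ spread over the \emph{fixed} arc $3\pi/4\le\arg z\le 5\pi/4$ (so consecutive zeros are $\asymp b_n/c_n$ apart, not exponentially close) and only needs the two-sided bound $\left(1+r^2/b_n^2\right)^{c_n/2}\le P_{c_n,b_n}(r)\le\left(1+r/b_n\right)^{c_n}$ of \eqref{maxmod2}, which together with $M(r,P_{c_n,b_n})=P_{c_n,b_n}(r)$ gives the same logarithmic order and the same $N/T$ estimates; a multiplicative discrepancy of order $2^{c_n/2}$ is harmless. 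And it is fatal to what you yourself flag at the end: exponentially tight clustering is precisely the failure mode of Example~\ref{Bank-ex} and Example~\ref{q-example}, where $|z_k||f'(z_k)|$ decays like $\exp(-\exp(|z_k|))$ and hence no uniform (logarithmic) $q$-separation can hold for any $q$; you cite those examples as the ``spirit'' of the construction, but they illustrate exactly what must be avoided. In the paper the zeros are separated on the circle by $\asymp b_i/c_i=b_i/\lfloor(\log b_i)^{\rho-1}\rfloor$, which is what makes the product $P_3(k)=\prod_{|z_n|=b_i,\,z_n\ne z_k}|1-z_k/z_n|$ bounded below by $c_i^{-c_i}\ge\exp(-(\rho-1)(\log b_i)^{\rho-1}\log\log b_i)$ and therefore compatible with logarithmic $q$-separation for $q>\rho-1$ (Lemma~\ref{uniformly-zero}); your choice would break this. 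One further small gap: the claim that $f(r)=M(r,f)$ follows ``just as for $L_\rho$'' needs an argument, since the zeros are now off the negative axis; the paper proves \eqref{maxmod} by observing that for each conjugate pair $\zeta,\overline{\zeta}$ with $3\pi/4\le\arg\zeta\le 5\pi/4$, the product $|(1-z/\zeta)(1-z/\overline{\zeta})|$ on $|z|=r$ is maximized at $z=r$.

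So: keep your decomposition into radii $b_n$ and multiplicities $c_n=\lfloor(\log b_n)^{\rho-1}\rfloor$, keep the conjugate-pair symmetry, but spread each cluster over the fixed arc $[3\pi/4,5\pi/4]$ rather than collapsing it; then both Lemma~\ref{AC-example} and the downstream separation go through.
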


\begin{proof}
Given $m\in\N$ and $b>0$, define a polynomial
	\begin{equation}\label{polynomial}
	P_{m,b}(z)=\prod_{k=1}^m\left(1-\frac{z}{be^{i\vp_{m,k}}}\right),	
	\end{equation}
where the arguments $\vp_{m,k}$ are chosen such that the zeros $be^{i\vp_{m,k}}$
of $P_{m,b}$ lie symmetrically in $3\pi/4\leq \arg(z)\leq 5\pi/4$
and on the circle $|z|=b$ as follows:
	\begin{eqnarray*}
	m=1\ &:&\ \vp_{1,1}=\pi,\\	
	m=2\ &:&\ \vp_{2,1}=3\pi/4,\ \vp_{2,2}=5\pi/4,\\
	m=3\ &:&\ \vp_{3,1}=3\pi/4,\ \vp_{3,2}=\pi,\ \vp_{3,3}=5\pi/4,\\
	m=4\ &:&\ \vp_{4,1}=3\pi/4,\  \vp_{4,2}=11\pi/12,
	\ \vp_{4,3}=13\pi/12,\ \vp_{4,4}=5\pi/4,
	\end{eqnarray*}
and, in general, for any integer $l\geq 1$,
	\begin{eqnarray*}
	m=2l+1\ &:&\ \vp_{2l+1,i}=\pi-\frac{\pi}{4}\cdot\frac{l-i+1}{l},\ i=1,\ldots,2l+1,\\[3pt]
	m=2l\ &:&\ \vp_{2l,i}=\pi-\frac{\pi}{4}\cdot\frac{2(l-i)+1}{2l-1},\ i=1,\ldots,2l.
	\end{eqnarray*}
If $m$ is even, then all zeros on $|z|=b$ are pairwise complex conjugate numbers. If $m$ is odd, then
precisely one of the zeros on $|z|=b$ lies on the negative real axis, while the rest are pairwise
complex conjugates.
Thus, if $z\in\R$, we have $P_{m,b}(z)\in\R$ due to the elementary identity $\zeta\overline{\zeta}=|\zeta|^2$.
In particular, $P_{m,b}$ is a real polynomial, and $\log P_{m,b}(z)$ is analytic in
$-3\pi/4<\arg(z)<3\pi/4$.

If $P_{m,b}$ has a real zero at $\zeta\in\R_-$, then the modulus
	$$
	\left|1-\frac{z}{\zeta}\right|=\frac{|\zeta-z|}{|\zeta|}
	$$
on any circle $|z|=r$ is maximal when $z=r$. Suppose then that $\zeta,\overline{\zeta}$ is any pair
of conjugate zeros of $P_{m,b}$. Then the modulus
	$$
	\left|\left(1-\frac{z}{\zeta}\right)\left(1-\frac{z}{\overline{\zeta}}\right)\right|
	=\frac{|\zeta-z||\overline{\zeta}-z|}{|\zeta|^2}
	$$
on any circle $|z|=r$ is maximal when $z$ is simultaneously as far as possible
from both points $\zeta,\overline{\zeta}$ that lie in $3\pi/4\leq \arg(w)\leq 5\pi/4$,
that is, when $z=r$. We repeat this reasoning for all zeros of $P_{m,b}$,
and conclude that 	
	\begin{equation}\label{maxmod}
	M(r,P_{m,b})=P_{m,b}(r).
	\end{equation}
Moreover, a simple geometric reasoning yields the growth estimates
    \begin{equation}\label{maxmod2}
    \left(1+\frac{r^2}{b^2}\right)^{m/2}=\left(\frac{\sqrt{b^2+r^2}}{b}\right)^m
    \leq P_{m,b}(r)\leq \left(1+\frac{r}{b}\right)^m.
    \end{equation}

Next we define two nondecreasing sequences $\{b_n\}$ and $\{c_n\}$ of
positive integers by setting $b_1=1=c_1$, and
    $$
    b_n =\exp\left(\left(\sum_{j=1}^{n-1}c_j\right)^{\frac{2}{\rho-2}}\right),\quad	
    c_n = \left\lfloor(\log b_n)^{\rho-1}\right\rfloor
    =\left\lfloor\left(\sum_{j=1}^{n-1}c_j\right)^{2\frac{\rho-1}{\rho-2}}\right\rfloor,
	$$
for $n\geq 2$, where $\lfloor x\rfloor$ denotes the integer part of $x$. This definition corresponds
to the choice of points $b_n,c_n$ in \cite{AC} when $\varphi(r)=(\log r)^{\rho-2}$. We define
a formal canonical product $f$ in terms of the polynomial factors in \eqref{polynomial} as
	\begin{equation}\label{f2}
	f(z)=\prod_{n=1}^\infty P_{c_n,b_n}(z),
	\end{equation}
and prove that this function has the required properties.

For $r>1$ there exists an integer $n$ such that $b_n<r\leq b_{n+1}$. If $n(r)$ denotes the
number of zeros of $f$ in $|z|<r$, counting multiplicities, then
	\begin{equation}\label{counting-function}
	\begin{split}
    n(r) &= n(b_n)=\sum_{j=1}^nc_j=c_n+(\log b_n)^\frac{\rho-2}{2}\\
    &\leq 2(\log b_n)^{\rho-1}\leq 2(\log r)^{\rho-1}.
    \end{split}
	\end{equation}
Thus the zeros of the formal product $f$ have logarithmic exponent of convergence $\leq \rho-1$,
so that $f$ is entire, and, in fact, $\rho_{\log}(f)\leq\rho$, see \cite{Chern}. Let
$\beta=2\frac{\rho-1}{\rho-2}>2$ for short. The auxiliary function
    $$
    g(x)=\frac{(1+x)^\beta}{1+x^\beta},\quad x\geq 0,
    $$
is increasing on $[0,1]$ and decreasing on $[1,\infty)$. Since $g(0)=1$ and since $g(x)\to 1$ as $x\to\infty$, we have $g(x)\geq 1$ for all $x\geq 0$, that is,
    $$
    (1+x)^\beta\geq 1+x^\beta,\quad x\geq 0.
    $$
Therefore, we have $c_1=c_2=1$, $c_3=\lfloor 2^\beta\rfloor\geq \lfloor 2^2\rfloor=4$, and
	\begin{equation}\label{cn-down}
    \begin{split}
	c_n &\geq \left\lfloor(1+c_{n-1})^\beta\right\rfloor\geq \left\lfloor 1+c_{n-1}^\beta\right\rfloor
    \geq c_{n-1}^{\beta}\geq \left\lfloor(1+c_{n-2})^{\beta^2}\right\rfloor\\
	&\geq \left\lfloor 1+c_{n-2}^{\beta^2}\right\rfloor\geq c_{n-2}^{\beta^2}\geq
    \cdots\geq c_3^{\beta^{n-3}}\geq 4^{\beta^{n-3}},\quad n\geq 4.
	\end{split}
    \end{equation}
Let $\mu>\rho-1$, so that $\sigma:=\frac{\mu}{\rho-1}-1>0$. Since
    \begin{eqnarray*}
    &&\sum_{n=4}^\infty \frac{c_n}{(\log b_n)^\mu}
    \leq\sum_{n=4}^\infty \frac{c_n}{\left(\lfloor(\log b_n)^{\rho-1}\rfloor\right)^\frac{\mu}{\rho-1}}
    = \sum_{n=4}^\infty \frac{1}{c_n^\sigma}\leq \sum_{n=4}^\infty 4^{-\sigma\beta^{n-3}}<\infty,\\
    &&\sum_{n=2}^\infty \frac{c_n}{(\log b_n)^{\rho-1}}
    \geq\sum_{n=2}^\infty \frac{c_n}{\lfloor(\log b_n)^{\rho-1}\rfloor+1}
    = \sum_{n=2}^\infty \frac{c_n}{c_n+1}=\infty,
    \end{eqnarray*}
it follows that the logarithmic exponent of convergence of the zero sequence of $f$
is equal to $\rho-1$, and thus $\rho_{\log}(f)=\rho$, see \cite{Chern}.

It remains to prove that $\delta_V(0,f)=1$, which is equivalent to
    $$
    \liminf_{r\to\infty}\frac{N(r,1/f)}{T(r,f)}=0.
    $$
It suffices to show that
    \begin{equation}\label{suffices-deficiency}
    \lim_{n\to\infty}\frac{N(b_n,1/f)}{T(b_n,f)}=0.
    \end{equation}
For $n\geq 2$, we have
    \begin{equation}\label{N-up}
    \begin{split}
    N(b_n,1/f) &= \int_0^{b_n}\frac{n(t)}{t}\, dt\leq n(b_{n-1})\int_1^{b_n}\frac{dt}{t}\\
    &=\left(\sum_{j=1}^{n-1}c_j\right)\log b_n=(\log b_n)^\frac{\rho}{2}.
    \end{split}
    \end{equation}
Estimating the characteristic function downwards requires more work. To begin with,
we observe that the representation \cite[(5.3)]{AC} for $\log M(r,f)$
is not valid in our case. However, we see by means of \eqref{maxmod} and \eqref{maxmod2} that
    $$
    \log M(r,f)=\log f(r)=\sum_{n=1}^\infty \log P_{c_n,b_n}(r)\geq \sum_{n=1}^\infty
    \frac{c_n}{2}\log\left(1+\frac{r^2}{b_n^2}\right).
    $$
For a fixed $r>0$ and all $t>r$, we deduce by \eqref{counting-function} and L'Hospital's rule that
    $$
    0\leq n(t)\log\left(1+\frac{r^2}{t^2}\right)\leq 2\frac{\log \left(1+r^2/t^2\right)}{(\log t)^{-(\rho-1)}}
    \sim \frac{4r^2}{\rho-1}\cdot\frac{(\log t)^\rho}{r^2+t^2}\to 0,
    $$
as $t\to\infty$. Therefore, Riemann-Stieltjes integration and integration by parts give us
	\begin{equation*}
	\log M(r,f)\geq \frac12\int_0^\infty\log\left(1+\frac{r^2}{t^2}\right)\, dn(t)=
    r^2\int_0^\infty \frac{n(t)\, dt}{t(t^2+r^2)}.
	\end{equation*}
Using the well-known inequality \cite[p.~18]{Hayman}
    $$
    \log M(r,f)\leq 3T(2r,f),\quad r>0,
    $$
we then deduce that
    \begin{equation}\label{deduce}
    T(r,f) \geq
    \frac{r^2}{12}\int_0^\infty \frac{n(t)\, dt}{t(t^2+r^2/4)}
    \geq \frac{n(b_n)r^2}{12}\int_{b_n}^{2b_n}\frac{dt}{t(t^2+r^2/4)},
    \end{equation}
where
    $$
    n(b_n)=\sum_{j=1}^nc_j\geq c_n=\left\lfloor(\log b_n)^{\rho-1}\right\rfloor\geq (\log b_n)^{\rho-1}-1.
    $$
Substituting $r=b_n$ and $t=b_nu$ in \eqref{deduce}, it follows that
    $$
    T(b_n,f)\geq \frac{1}{12}\left((\log b_n)^{\rho-1}-1\right)\int_1^2\frac{du}{u(u^2+1/4)}
    \geq \frac{1}{40}\left((\log b_n)^{\rho-1}-1\right).
    $$
Combining this with \eqref{N-up} finally gives \eqref{suffices-deficiency}, because $\rho>2$.
\end{proof}

It is not necessary for $k,n$ in Lemma~\ref{asymptotic-lemma} to be integers.
We will state, without a proof, the following analogue of Lemma~\ref{asymptotic-lemma} needed later on.

\begin{lemma}\label{asymptotic-lemma2}
For $0<A<B$ and $\gamma>0$, we have the inequalities
    \begin{eqnarray}
    \gamma (B-A)B^{\gamma-1}\leq B^\gamma-A^\gamma &\leq& \gamma (B-A)A^{\gamma-1} ,\quad 0<\gamma\leq 1,\label{AB2}\\
    \gamma (B-A)A^{\gamma-1}\leq B^\gamma-A^\gamma &\leq& \gamma (B-A)B^{\gamma-1} ,\quad 1\leq\gamma<\infty.\label{AB}
    \end{eqnarray}
\end{lemma}

It turns out that the zero sequence of the function $f$ in Lemma~\ref{AC-example} is separated
in the following sense.

\begin{lemma}\label{uniformly-zero}
The zero sequence of the function $f$ in \eqref{f2} is uniformly
logarithmically $q$-separated for every $q>\rho-1$.
\end{lemma}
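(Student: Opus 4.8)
The plan is to estimate the product in \eqref{separation2} from below. Since the zero sequence of $f$ has finite logarithmic exponent of convergence, its genus is $p=0$, so \eqref{separation2} reduces to exhibiting a constant $C>0$ with
\[
\inf_{k}\Big\{e^{C(\log(1+|z_k|))^q}\prod_{j\neq k}\Big|1-\frac{z_k}{z_j}\Big|\Big\}>0.
\]
Fix a zero $z_k$; by \eqref{f2} it lies on a circle $|z|=b_n$, so $|z_k|=b_n$. I would split $\prod_{j\neq k}\big|1-z_k/z_j\big|$ into three parts according to whether $z_j$ lies on the same circle $|z|=b_n$, on an inner circle $|z|=b_m$ with $m<n$, or on an outer circle $|z|=b_m$ with $m>n$, and estimate each part separately.

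The inner and outer parts are controlled by the very fast growth of $\{b_n\}$. By \eqref{cn-down} the sequence $\{c_n\}$ grows at least doubly exponentially, so $\log b_{n+1}/\log b_n\to\infty$; in particular there is $n_0$ with $b_{n+1}\geq 2b_n$ for $n\geq n_0$. For $z_j$ on an inner circle, $\big|1-z_k/z_j\big|\geq b_n/b_m-1\geq b_n/b_{n-1}-1\geq1$ once $n\geq n_0$; for $z_j$ on an outer circle, $\big|1-z_k/z_j\big|\geq1-b_n/b_m$, and using $\log(1-x)\geq-2x$ on $[0,1/2]$ together with $c_m\leq(\log b_m)^{\rho-1}$ and the super-rapid growth of $b_m$ one finds $\sum_{m>n}c_m\log(1-b_n/b_m)\geq-2b_n\sum_{m>n}c_m/b_m\to0$ as $n\to\infty$. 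Hence for $n\geq n_0$ the inner part is $\geq1$ and the outer part is $\geq e^{o(1)}$; for the finitely many $n<n_0$ each of these two parts is a finite, respectively convergent, product of nonzero factors (the zeros of $f$ are simple by Lemma~\ref{AC-example}), hence a positive number. Altogether the inner and outer parts contribute at least a constant $\kappa>0$, uniformly in $k$.

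The main work, and the main obstacle, is the same-circle part. By \eqref{polynomial} the $c_n$ zeros of $P_{c_n,b_n}$ on $|z|=b_n$ have arguments equally spaced over the arc $3\pi/4\leq\arg z\leq5\pi/4$, with angular spacing $\delta_n=\pi/\big(2(c_n-1)\big)$. If $z_k$ is the $s$-th of them, then $\big|1-e^{i\theta}\big|=2|\sin(\theta/2)|$ gives
\[
\prod_{\substack{j\neq k\\ |z_j|=b_n}}\Big|1-\frac{z_k}{z_j}\Big|
=\prod_{a=1}^{s-1}2\sin\frac{a\delta_n}{2}\cdot\prod_{a=1}^{c_n-s}2\sin\frac{a\delta_n}{2}.
\]
For $a\leq c_n-1$ we have $a\delta_n/2\in(0,\pi/4]$, where $\sin x\geq(2\sqrt2/\pi)x$, so each factor is $\geq\sqrt2\,a/(c_n-1)$, and the product is at least $(\sqrt2)^{c_n-1}(s-1)!\,(c_n-s)!/(c_n-1)^{c_n-1}$. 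Now $m!\geq(m/e)^m$ and the convexity inequality $u^uv^v\geq\big((u+v)/2\big)^{u+v}$ (integers $u,v\geq0$), applied with $u=s-1$ and $v=c_n-s$, bound this below by $\big(1/(\sqrt2\,e)\big)^{c_n-1}=e^{-D(c_n-1)}$ with $D=1+\tfrac12\log2$, uniformly over the position $s$ and over $n$.

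Combining the three parts gives $\prod_{j\neq k}\big|1-z_k/z_j\big|\geq\kappa\,e^{-Dc_n}\geq\kappa\,e^{-D(\log b_n)^{\rho-1}}$, since $c_n=\lfloor(\log b_n)^{\rho-1}\rfloor\leq(\log b_n)^{\rho-1}$ for $n\geq2$. For $n\geq2$ we have $|z_k|=b_n\geq e$, hence $(\log(1+|z_k|))^q\geq(\log b_n)^q$, and since $q>\rho-1$ the quantity $C(\log b_n)^q-D(\log b_n)^{\rho-1}=(\log b_n)^{\rho-1}\big(C(\log b_n)^{q-\rho+1}-D\big)$ is $\geq0$ for all large $n$ and any fixed $C>0$. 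Thus $e^{C(\log(1+|z_k|))^q}\prod_{j\neq k}\big|1-z_k/z_j\big|\geq\kappa$ for all large $n$, while the finitely many smaller $n$ contribute fixed positive numbers; taking the infimum yields \eqref{separation2}, and the lemma follows. The delicate point is precisely the same-circle estimate: the arc has fixed opening $\pi/2$ but carries $c_n\to\infty$ zeros, so the spacing decays like $1/c_n$, and the convexity bound is what makes the estimate uniform in the location of $z_k$ on the arc; the resulting factor is of the borderline size $(\log b_n)^{\rho-1}$, which explains why the statement is restricted to $q>\rho-1$.
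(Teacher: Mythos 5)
Your decomposition of $\prod_{n\neq k}\bigl|1-z_k/z_n\bigr|$ into inner, outer, and same-circle parts according to whether $z_n$ lies inside, outside, or on the circle $|z|=b_i$ containing $z_k$ is precisely the paper's decomposition into $P_1(k),P_2(k),P_3(k)$, and both arguments correctly identify the same-circle product as the piece that forces $q>\rho-1$. The argument is correct, but two of the three estimates are carried out differently and are worth comparing. For the inner part, the paper first applies $e^x-1\geq x$ to $b_i/b_{i-1}-1$ and then splits into the three cases $\gamma=1$, $\gamma>1$, $0<\gamma<1$ for $\gamma=2/(\rho-2)$, requiring the exponential weight $e^{C(\log(1+|z_k|))^q}$ precisely to rescue the case $0<\gamma<1$ (i.e.\ $\rho>4$). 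Your simpler observation that $b_n/b_{n-1}\to\infty$, so each inner factor $b_n/b_m-1\geq b_n/b_{n-1}-1\geq 1$ for $n\geq n_0$, short-circuits this entirely and shows the weight is not in fact needed for $P_1$; the paper's bound $e^x-1\geq x$ is only tight near $x=0$, while the relevant exponents tend to infinity, which is what your argument exploits. For the same-circle part, the paper uses the crude nearest-neighbor bound $P_3(k)\geq\bigl(2\sin\tfrac{\pi}{4(c_i-1)}\bigr)^{c_i-1}\geq(1/c_i)^{c_i}=e^{-c_i\log c_i}$, whereas your explicit factorial computation combined with the convexity inequality $u^uv^v\geq\bigl((u+v)/2\bigr)^{u+v}$ gives the tighter bound $e^{-D(c_n-1)}$ with $D=1+\tfrac12\log 2$, uniformly over the position index $s$ on the arc; this removes the extraneous $\log c_i$ factor from the exponent. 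Both bounds suffice for the claimed range $q>\rho-1$, but your sharper same-circle estimate would in fact let the weight (with a large enough $C$) absorb the same-circle decay even at the borderline $q=\rho-1$, so in that sense it is genuinely stronger. The outer-part estimate is the same in spirit as the paper's $P_2(k)$, only presented more briefly; the details can be filled in exactly as the paper does.
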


\begin{proof}
Let $\{z_n\}$ denote the zeros of $f$ organized first according to the increasing modulus and
then, on each circle $|z|=b_j$, according to the increasing argument on $[3\pi/4,5\pi/4]$. This
fixes $\{z_n\}$ uniquely. Since $\sum_n |z_n|^{-1}<\infty$ and since $|z_n|\geq e$ for $n\geq 2$, we observe that
    $$
    \prod_{n\geq 2}\left|1-\frac{z_1}{z_n}\right|
    \geq\prod_{n\geq 2}\left(1-\frac{1}{|z_n|}\right)\geq C_0>0.
    $$
If $k\geq 2$, we have $|z_k|=b_i$ for some $2\leq i\leq k$, and
    \begin{equation}\label{P123}
    \begin{split}
    \prod_{n\neq k}\left|1-\frac{z_k}{z_n}\right|
    &=\prod_{|z_n|<b_i}\left|1-\frac{z_k}{z_n}\right|
    \prod_{|z_n|>b_i}\left|1-\frac{z_k}{z_n}\right|
    \prod_{|z_n|=b_i,\, z_n\neq z_k}\left|1-\frac{z_k}{z_n}\right|\\
    &=:P_1(k)P_2(k)P_3(k),
    \end{split}
    \end{equation}
where we set $P_3(2)=1$ because $z_n$ is on the circle $|z|=b_2$ only when $n=2$.
The products $P_1(k),P_2(k),P_3(k)$
converge for any finite $k$, so it suffices to estimate them for arbitrary
large values of $k$ (or for arbitrarily large values of $i$ due to $|z_k|=b_i$).

\medskip
\noindent
\textbf{Estimate for $P_1(k)$:} We have
    \begin{eqnarray*}
    P_1(k) &\geq& \prod_{|z_n|<b_i}\left(\left|\frac{z_k}{z_n}\right|-1\right)
    =\prod_{m=1}^{i-1}\left(\frac{b_i}{b_m}-1\right)^{c_m}
    \geq \prod_{m=1}^{i-1}\left(\frac{b_i}{b_{i-1}}-1\right)^{c_m}.
    \end{eqnarray*}
If $i=2$, we get $P_1(k)\geq e-1>1$, while if $i\geq 3$, we use the inequality $e^x-1\geq x$, and obtain
    \begin{eqnarray*}
    P_1(k) &\geq& \prod_{m=1}^{i-1}\left(\left(\sum_{j=1}^{i-1}c_j\right)^{\gamma}
    -\left(\sum_{j=1}^{i-2}c_j\right)^{\gamma}\right)^{c_m},
    \end{eqnarray*}
where $\gamma=\frac{2}{\rho-2}$. If $\gamma=1$, we clearly have $P_1(k)\geq 1$.
If $\gamma>1$, then \eqref{AB} gives
    $$
    P_1(k)\geq \prod_{m=1}^{i-1}\left(\gamma
    \left(\sum_{j=1}^{i-2}c_j\right)^{\gamma-1}c_{i-1}\right)^{c_m}
    \geq \prod_{m=1}^{i-1}\left(\gamma
    c_1^{\gamma}\right)^{c_m}\geq 1.
    $$
If $0<\gamma<1$, then from \eqref{AB2} we have
    \begin{eqnarray*}
    e^{(\log(1+|z_k|))^q}P_1(k)&\geq&
    e^{(\log b_i)^q}\prod_{m=1}^{i-1}\left(\gamma\left(\sum_{j=1}^{i-1}c_j\right)^{\gamma-1}c_{i-1}\right)^{c_m}\\
    &\geq& \exp\left(\left(\sum_{j=1}^{i-1}c_j\right)^{\gamma q}
    -\sum_{m=1}^{i-1}c_m\log\left(\frac{1}{\gamma}\left(\sum_{j=1}^{i-1}c_j\right)^{1-\gamma}\right)\right).
    \end{eqnarray*}
The function $x\mapsto x^{\gamma q}-x\log\left(\frac{x^{1-\gamma}}{\gamma}\right)$ is eventually increasing
and unbounded for every
$q>1/\gamma$. Summarizing, for any $\gamma>0$ there exists a constant $C_1=C_1(\gamma,q)>0$ such that
    \begin{equation}\label{P1}
    e^{(\log(1+|z_k|))^q}P_1(k)\geq C_1,\quad q> \frac{\rho-2}{2},\ k\geq 2.
    \end{equation}

\smallskip
\noindent
\textbf{Estimate for $P_2(k)$:}
Clearly $j\leq c_j\leq c_{j+1}$ for every $j\geq 3$, and so
    \begin{equation}\label{cn-up}
    \begin{split}
    c_n &\leq \left(\sum_{j=1}^{n-1}c_j\right)^{\beta}\leq \big((n-1)c_{n-1}\big)^{\beta}
    \leq c_{n-1}^{2\beta}\\
    &\leq \left(\sum_{j=1}^{n-2}c_j\right)^{2\beta^2}
    \leq \big((n-2)c_{n-2}\big)^{2\beta^2}\leq c_{n-2}^{(2\beta)^2}\\
    &\leq\ldots
    \leq c_{3}^{(2\beta)^{n-3}}\leq \left(2^\beta\right)^{(2\beta)^{n-3}}\leq 2^{(2\beta)^{n-2}},\quad n\geq 5.
    \end{split}
    \end{equation}
Suppose that $\gamma=1$. For $m\geq i+1\geq 5$, the estimate \eqref{cn-down} yields
    \begin{equation}\label{ratio-b}
    \frac{b_i}{b_{m}}=\exp\left(-\sum_{j=i}^{m-1}c_j\right)
    \leq \exp\big(-c_{m-1}\big)\leq \exp\big(-4^{\beta^{m-4}}\big).
    \end{equation}
In particular, $b_i/b_m\leq 1/e<1/2$ for all $m\geq i+1\geq 5$. Thus, using
$\log(1-x)\geq -2x$ for $x\in \left[0,1/2\right]$, we have
    \begin{eqnarray*}
    P_2(k) &\geq& \prod_{|z_n|>b_i}\left(1-\left|\frac{z_k}{z_n}\right|\right)
    =\prod_{m=i+1}^\infty\left(1-\frac{b_i}{b_m}\right)^{c_m}\\
    &=& \exp\left(\sum_{m=i+1}^\infty c_m\log\left(1-\frac{b_i}{b_m}\right)\right)
    \geq\exp\left(-2\sum_{m=i+1}^\infty c_m\cdot\frac{b_i}{b_m}\right).
    \end{eqnarray*}
By combining \eqref{cn-up} and \eqref{ratio-b}, we deduce that
    \begin{eqnarray*}
    P_2(k)&\geq&
    \exp\left(-2\sum_{m=5}^\infty \exp\left((2\beta)^{m-2}\log 2-4^{\beta^{m-4}}\right)\right).
    \end{eqnarray*}
Since $(2\beta)^{m-2}\log 2-4^{\beta^{m-4}}\leq -4^{\beta^{m-5}}$ for all $m$ large
enough, the series above converges.
Suppose then that $\gamma>1$. We multiply both sides of the first inequality in \eqref{AB} by the
constant $-1$, and use the resulting inequality together with $m\geq i+1$ to deduce that
    \begin{eqnarray*}
    \frac{b_i}{b_{m}} &= &
    \exp\left(\left(\sum_{j=1}^{i-1}c_j\right)^{\gamma}-\left(\sum_{j=1}^{m-1}c_j\right)^{\gamma}\right)\\
    &\leq&
    \exp\left(-\gamma\left(\sum_{j=1}^{i-1}c_j\right)^{\gamma-1}\cdot\sum_{j=i}^{m-1}c_j\right)\\
    &\leq&\exp\left(-\gamma c_1^{\gamma-1}c_{m-1}\right)\leq \exp\left(-c_{m-1}\right).
    \end{eqnarray*}
Next we use \eqref{cn-down} to obtain
	$$
	\frac{b_i}{b_{m}} \leq \exp\big(-4^{\beta^{m-4}}\big),
	$$
and then proceed the same way as in the case $\gamma=1$.
Finally, suppose that $0<\gamma<1$. We multiply both sides of the first inequality in \eqref{AB2}
by the constant $-1$, and use the resulting inequality together with $m\geq i+1$ to deduce that
    \begin{eqnarray*}
    \frac{b_i}{b_{m}} &\leq &
    \exp\left(-\gamma\left(\sum_{j=1}^{m-1}c_j\right)^{\gamma-1}\cdot\sum_{j=i}^{m-1}c_j\right)
    \leq \exp\left(-\gamma c_{m-1}\left(\sum_{j=1}^{m-1}c_j\right)^{\gamma-1}\right)\\
    &\leq& \exp\left(-\gamma c_{m-1}\big((m-1)c_{m-1}\big)^{\gamma-1}\right)
    = \exp\left(-\gamma(m-1)^{\gamma-1}c_{m-1}^{\gamma}\right),
    \end{eqnarray*}
where $m\geq i+1\geq 5$. For $i$ large enough, $b_i/b_m\leq 1/e<1/2$ clearly holds. Similarly as above,
    \begin{eqnarray*}
    P_2(k)
    &\geq& \exp\left(-2\sum_{m=i+1}^\infty c_m\cdot\frac{b_i}{b_m}\right)\\
    &\geq&\exp\left(-2\sum_{m=5}^\infty
    \exp\left((2\beta)^{m-2}\log 2-\gamma (m-1)^{\gamma-1}4^{\gamma\beta^{m-4}}\right)\right).
    \end{eqnarray*}
Since $(2\beta)^{m-2}\log 2-\gamma (m-1)^{\gamma-1}4^{\gamma\beta^{m-4}}\leq -4^{\gamma\beta^{m-5}}$
for all $m$ large enough, the series above converges. Summarizing, for any $\gamma>0$ there
exists a constant $C_2>0$ such that
    \begin{equation}\label{P2}
    P_2(k)\geq C_2,\quad k\geq 2.
    \end{equation}

\smallskip
\noindent
\textbf{Estimate for $P_3(k)$:} Recall that $P_3(k)$ is undefined for $i=1$, and that we have set $P_3(k)=1$
for $i=2$, so suppose that $i\geq 3$. Now $c_i\geq c_3\geq 4$. There are $c_i-1$ factors in $P_3(k)$.
The distance between two consecutive zeros $z_n$ on the circle $|z|=b_i$ is
$2b_i\sin \frac{\pi}{4(c_i-1)}$. Using $\sin x\geq \frac{x}{\sqrt{2}}$ for $0<x<\frac{\pi}{4}$, we get
    \begin{eqnarray*}
    P_3(k)&=&\prod_{|z_n|=b_i,\, z_n\neq z_k}\frac{|z_n-z_k|}{b_i}
    \geq \left(2\sin \frac{\pi}{4(c_i-1)}\right)^{c_i-1}\\
    &\geq& \left(\frac{\sqrt{2}\pi}{4(c_i-1)}\right)^{c_i-1}\geq \left(\frac{1}{c_i-1}\right)^{c_i-1}
    \geq \left(\frac{1}{c_i}\right)^{c_i},
    \end{eqnarray*}
and so
	\begin{eqnarray*}
	e^{(\log(1+|z_k|))^q}P_3(k) &\geq& \exp\left((\log b_i)^q-c_i\log c_i\right)\\
	&\geq& \exp\left((\log b_i)^q-(\rho-1)(\log b_i)^{\rho-1}\log\log b_i\right).
	\end{eqnarray*}
The function $x\mapsto (\log x)^q-(\rho-1)(\log x)^{\rho-1}\log\log x$ is eventually positive for every
$q>\rho-1$. Thus there exists a constant $C_3=C_3(q)>0$ such that
	\begin{equation}\label{P3}
	e^{(\log(1+|z_k|))^q}P_3(k)\geq C_3,\quad q>\rho-1,\ k\geq 2.
	\end{equation}

\smallskip
\noindent
\textbf{Final conclusion:}
We complete the proof of Lemma~\ref{uniformly-zero} by combining \eqref{P123}
with \eqref{P1}, \eqref{P2} and \eqref{P3}.
\end{proof}

\medskip
\noindent
\emph{Proof of Theorem~\ref{main2}}.
After these preparations, the actual proof of Theorem~\ref{main2} is now easy.
Let $f$ be the function in \eqref{f2}. By Lemma~\ref{AC-example}, $f$ has the required properties
for the solution of \eqref{ldeAB}. Further, by Lemma~\ref{uniformly-zero}, the zero sequence
of $f$ is uniformly logarithmically $q$-separated for any $q>\rho-1$.
Since $\rho=\rho_{\log}(f)=\rho_{\log}(f'')$, the target sequence $\{\sigma_n\}$ in \eqref{interpolation}
can be estimated as follows: For every $\sigma>\rho$ there exist constants $C>0$ and $C_1>0$ such that
    $$
    |\sigma_n|\leq C_1|z_n|\exp\big((\log(1+|z_n|))^\sigma+C(\log(1+|z_n|))^q\big),\quad n\in\N.
    $$
Using Lemma~\ref{interpolation-lemma2}, we conclude that there exists
an entire function $A(z)$ satisfying \eqref{interpolation} such that
$\rho_{\log}(A)\leq\max\{\rho_{\log}(f),q\}$, where we may suppose that $q\leq \rho_{\log}(f)$.

Finally, we define the entire coefficient $B(z)$ by \eqref{def-B}.
It is easy to see that $A(z)$ must be transcendental. For if
$A(z)$ is a polynomial, then
    $$
    T(r,B)=m(r,B)=m\left(r,\frac{f''}{f}+A\frac{f'}{f}\right)=O(\log r),
    $$
so that $B(z)$ is also a polynomial.
But this leads to a contradiction as any transcendental solution must be of positive usual order
in the case of polynomial coefficients \cite{G-S-W}. Thus
    $$
    \liminf_{r\to\infty}\frac{T(r,A)}{\log r}=\infty.
    $$
Using the lemma on the logarithmic derivative to $$|B(z)|\leq |f''(z)/f(z)|+|A(z)||f'(z)/f(z)|,$$
we then obtain $\rho_{\log}(B)\leq\rho_{\log}(A)$.
Thus it remains to show that $B(z)\not\equiv 0$. Suppose on the contrary that $B(z)\equiv 0$,
in which case \eqref{ldeAB} reduces to $f''+A(z)f'=0$. Then either $f$ is a polynomial or $\rho(f)\geq 1$,
which are both impossible. This completes the proof. \hfill$\Box$

\bigskip
\noindent
\textbf{Acknowledgement:} All authors were partially supported by the Academy of Finland Project \#268009.
The second author was partially supported by the Magnus Ehrnrooth Foundation
and the Vilho, Yrj\"o and Kalle V\"ais\"al\"a Foundation of the Finnish Academy of Science and Letters.
The third author was partially supported by the National
Natural Science Foundation for the Youth of China (No.~11501402) and the Shanxi Scholarship
Council of China (No.~2015-043).


\begin{thebibliography}{99}
\bibitem{AC}
Anderson J.~M.~and J.~Clunie,
\emph{Slowly growing meromorphic functions}.
Comment. Math.~Helv.~\textbf{40} (1966), 267--280.

\bibitem{Bank}
Bank S.,
\emph{A note on the zero-sequences of solutions of linear differential equations}.
Results in Math.~\textbf{13} (1988), 1--12.

\bibitem{BP}
Berg C.~and H.~Pedersen,
\emph{Logarithmic order and type of indeterminate moment problems.}
With an appendix by Walter Hayman.
Difference equations, special functions and orthogonal polynomials, 51--79,
World Sci. Publ., Hackensack, NJ, 2007.

\bibitem{Chern}
Chern P.,~T.-Y.,
\emph{On meromorphic functions with finite logarithmic order}.
Trans. Amer.~Math.~Soc.~\textbf{358} (2006), no.~2, 473--489.

\bibitem{EF}
    Edrei A.~and W.~Fuchs,
    \emph{On the growth of meromorphic functions with several deficient values}.
    Trans.~Amer.~Math.~Soc.~\textbf{93} (1959), 292--328.

\bibitem{GO}
Goldberg A.~A.~and I.~V.~Ostrovskii,
\emph{Value Distribution of Meromorphic Functions}.
Translated from the 1970 Russian original by Mikhail Ostrovskii.
With an appendix by Alexandre Eremenko and James K.~Langley.
Translations of Mathematical Monographs, 236. American Mathematical Society, Providence, RI, 2008.

\bibitem{GH}
Gr\"ohn J.~and J.~Heittokangas,
\emph{New findings on the Bank-Sauer approach in oscillation theory}.
Constr.~Approx.~\textbf{35} (2012), no.~3, 345--361.

\bibitem{Gundersen2}
Gundersen G.~G.,
\emph{Finite order solutions of second order linear differential equations}.
Trans.~Amer.~Math.~Soc.~\textbf{305} (1988), no.~1, 415--429.

\bibitem{G-S-W}
Gundersen G.~G., E.~M.~Steinbart and S.~Wang,
\emph{The possible orders of
solutions of linear differential equations with  polynomial coefficients}.
Trans.~Amer. Math.~Soc.~{\bf350} (1998), no.~3, 1225--1247.

\bibitem{problembook}
Havin V.~P.~and N.~K.~Nikolski (Editors),
\emph{Linear and Complex Analysis. Problem Book 3. Part II.}
Lecture Notes in Mathematics, 1574. Springer-Verlag, Berlin, 1994.


\bibitem{Hayman}
Hayman W.~K.,
\emph{Meromorphic Functions}.
Clarendon Press, Oxford, 1964.

\bibitem{Hayman2}
Hayman W.~K.,
\emph{On the Valiron deficiencies of integral functions of infinite order}.
Ark.~Mat.~\textbf{10} (1972), 163--172.

\bibitem{HL}
Heittokangas J.~and I.~Laine,
\emph{Solutions of $f''+A(z)f=0$ with prescribed sequences of zeros}.
Acta Math.~Univ.~Comenianae LXXIV (2005), no.~2, 287--307.

\bibitem{Hylle}
Hyllengren A.,
\emph{Valiron deficient values for meromorphic functions in the plane}.
Acta Math.~\textbf{124} (1970), 1--8.

\bibitem{Laine}
Laine I.,
\emph{Nevanlinna Theory and Complex Differential Equations}.
Walter de Gruyter, Berlin-New York, 1993.

\bibitem{Levin}
Levin B.~Ja.,
\emph{Distribution of Zeros of Entire Functions}.
Translated from the Russian by R.~P.~Boas, J.~M.~Danskin,
F.~M.~Goodspeed, J.~Korevaar, A.~L.~Shields and H.~P.~Thielman.
Revised edition. Translations of Mathematical Monographs, 5.
American Mathematical Society, Providence, R.I., 1980.

\bibitem{Neva}
Nevanlinna R.,
\emph{Le Th\'eor\`{e}me de Picard-Borel et la Th\'eorie des Fonctions M\'eromorphes}.
Gauthier-Villars, Paris, 1929.

\bibitem{Nevanlinna}
Nevanlinna R.,
\emph{Analytic Functions}.
Translated from the second German edition by Phillip Emig.
Die Grundlehren der mathematischen Wissenschaften, Band 162 Springer-Verlag, New York-Berlin, 1970.

\bibitem{Stein1}
Steinmetz N.,
\emph{Zur Wertverteilung von Exponentialpolynomen}.
Manuscripta Math.~\textbf{26} (1978/79), no.~1--2, 155--167.

\bibitem{Tsuji}
Tsuji M.,
\emph{Potential Theory in Modern Function Theory}.
Reprinting of the 1959 original. Chelsea Publishing Co., New York, 1975.

\bibitem{Valiron}
Valiron G.,
\emph{Sur les valeurs d\'eficientes des fonctions alg\'ebro\"ides m\'eromorphes}.
J.~Anal.~Math.~\textbf{1} (1951), 28--42. (French)

\bibitem{Wittich}
Wittich H.,
\emph{Neuere Untersuchungen \"uber eindeutige analytische Funktionen}.
2nd Edition, Springer-Verlag, Berlin-Heidelberg-New York, 1968.

\end{thebibliography}
\end{document}